\newcommand{\ub}{\sM[k,\mu_0]}
\newcommand{\llb}{\mathcal{V}_n[k,\mu_0]}
\newcommand{\lbl}{\mathcal{V}[k,\mu_0]}
\title[Estimation in exponential families on permutations]{Estimation in  exponential families on permutations}
\author[S.\ Mukherjee]{Sumit Mukherjee}
\address{ Department of Statistics, Columbia University
\newline\indent 1255 Amsterdam Avenue, New York, NY 10027
\newline\indent Email id: {\tt sm3949@columbia.edu}}
\begin{document}

\date{\today}

\subjclass[2010]{62F12, 60F10, 05A05 }
\keywords{{Permutation,  Normalizing constant, Mallows Model, Pseudo-likelihood.}}


\begin{abstract}
Asymptotics of the normalizing constant  is computed for a class of one parameter exponential families on permutations which includes  Mallows model with Spearmans's Footrule and Spearman's Rank Correlation Statistic. The MLE, and a computable approximation of the MLE are shown to be consistent. The pseudo-likelihood estimator of Besag is shown to be $\sqrt{n}$-consistent. An iterative algorithm (IPFP) is proved to converge to the limiting normalizing constant. The Mallows model with Kendall's Tau is also analyzed to demonstrate flexibility of the tools of this paper.
\end{abstract}

\maketitle

\section{Introduction}

Analysis of  permutation data has a fairly long history in statistics. One of the earlier papers in this area is the work of Mallows  (\cite{Mallows}) in 1957, where the author proposed a class of exponential families of permutations, thereby referred to in the literature as Mallows models. Using this modelling approach, in 1978 Feigin and Cohen( \cite{FCo})  analyzed the nature of  agreement between several judges in a contest. In 1985, Critchlow (\cite{Cr}) gave some examples where Mallows model gives a good fit to ranking data. See also the works of Fligner and Verducci (\cite{FV1,FV2}), and Critchlow, Fligner and Verducci  (\cite{CFV}), which deal with various aspects of permutation models, and the book length treatment of Marden in \cite{Mar}, which covers both theoretical and applied aspects of permutation modeling.
Permutation modeling has also received some recent attention in Machine Learning literature. Location and Scale mixture of Mallows model have been studied in \cite{ABSV,LL}. A generalized version of  Mallows model  which was introduced by Fligner and Verducci  was studied in \cite{CBBK,MPPB}, which has been extended to
infinite permutations in
\cite{MB1,MB2}.  The works of \cite{HGG,KHJ} study inference on permutations via fourier analysis of representation of finite groups with the focus of reducing computational complexity. Modeling of partially ranked data using Mallows models and its extensions was studied in \cite{LM}.

This paper analyzes a class of exponential families on the space of permutations  using the recently developed concept of  permutation limits. The notion of permutation limits has been first introduced in \cite{HKMRS}, and is motivated by dense graph limit theory (see \cite{BCLSV,BCLSV2,CD,LB} and the references there-in) . The main idea is that a permutation can be thought of as a probability measure on the unit square with uniform marginals. Multivariate distribution with uniform marginals have been studied widely in  Probability and Statistics (see \cite{GM,JDHR,MS,MN,RST,SW,Sklar,W} and references there-in) and  Finance (see \cite{ACH,BPT,DE,LR,MFE,Meu,MV,N} and references there-in) under the name copula.  One of the reasons for their popularity is that copulas are able to capture any dependence structure, as shown in Sklar's theorem (\cite{Sklar}). This is particularly useful in Finance when the assumption of independence of observations is far from the truth, as is believed for stock prices of various companies. 

%
%

\subsection{The 1970 Draft Lottery}

  To see how permutation data can arise naturally, consider the following example of historical importance where a random permutation was used to decide fate of human lives.   On December 1, 1969 during the Vietnam War  the U.S. Government used a random permutation of size $366$ to decide the relative dates of when the people  (among the citizens of U.S.A. born between the years 1944-1950) will be inducted into the army in the year 1970, based on their birthdays. 366 cylindrical capsules were put in a large box, one for each day of the year. The people who were born on the first chosen date  had to join the war first, those born on the second chosen date had to join next, and so on. 
 There were widespread allegations that the chosen permutation was not uniformly random. In \cite{F} Fienberg computed the Spearman's rank correlation between the birthdays and lottery numbers to be $-0.226$, which is significantly negative at $0.001$ level of significance. This suggests that people born in later part of the year were more likely to be  inducted earlier in the army.
 
  If a permutation is not chosen uniformly at random, then the question arises whether a particular non uniform model gives a better fit. It might be the case that there is a specific permutation $\sigma$ towards which the sampling mechanism has a bias, and permutations close to $\sigma$ have a higher probability of being selected.  For example in the draft lottery example $\sigma$ is the permutation 
  $$(366,365,364,\cdots,3,2,1).$$
  
  The Mallows models, which have the p.m.f. 
  $$e^{- \theta\sum\limits_{i=1}^nd(\pi,\sigma)-Z_n(\theta,\sigma)},$$
 are able to capture such behavior. Here $\sigma$ is a fixed permutation which is a location parameter, $\theta$ is a real valued parameter, and $d(.,.)$ is a distance function on the space of permutations. Here $Z_n(\theta,\sigma)$ denotes the (unknown)  log normalizing constant of this family. For $\theta$ large and positive, permutations $\pi$ which are away from $\sigma$   have small probability compared to those close to $\sigma$. The hypothesis of uniformity in this setting is equivalent to the hypothesis that $\theta=0$.

Possibly the most famous and widely used model on permutations is the Mallows model with Kendall's Tau as the metric. One of the reasons for this is that for this model the normalizing constant is known explicitly (see, for example \cite[(2.9)]{DR}), and so analyzing this model becomes a lot simpler. However when one moves away from Mallows model with Kendall's Tau and its generalizations, not much theory is available in the literature.  One reason for this is that normalizing constant is not available in closed form, and there is no straight forward independence assumptions in the model which one can exploit to analyze such models. Even basic properties for such models such as identifiability and consistency of estimates are not well understood.

 \subsection{Some common choices of Metric}
 \bigskip

By a metric $d(.,.)$ is usually meant a non negative function on $S_n\times S_n$ satisfying the following conditions:
  \begin{align*}
&d(\pi,\sigma)\ge 0,\text{ with equality iff }\pi=\sigma,\\
&d(\pi,\sigma)=d(\sigma,\pi),\\
&d(\pi,\sigma)\le d(\pi,\tau)+d(\sigma,\tau).
\end{align*}
Another restriction  on $d(.,.)$ which seems reasonable is that $d(.,.)$ is right invariant, i.e. 
\begin{align*}
d(\pi,\sigma)=d(\pi\circ \tau,\sigma\circ \tau),\text{ for all }\pi,\sigma, \tau\in S_n.
\end{align*} 
The justification for this last requirement is as follows: Suppose the students in a class are labelled $\{1,2,\cdots,n\}$, and let $\pi(i)$ and $\sigma(i)$ denote the rank of student $i$  based on Math and Physics scores respectively (assume no tied scores). The distance $d(\pi,\sigma)$ can be thought of as a measure of the strength of the relationship between Math and Physics rankings. If students are now labelled differently using  a permutation $\tau$, so that student $i$ now becomes student $\tau(i)$, then the Math and Physics rankings become $\pi\circ\tau$ and $\sigma\circ\tau$ respectively. But this relabeling of students in principle should not change the relation between Math and Physics rankings, which requires the right invariance of  $d(.,.)$. 

 Some of the common choices  of right invariant metric $d(.,.)$   are the following (\cite[Ch-5,6]{D}). 

\begin{enumerate}[(a)]
\item{Spearman's Foot Rule:}  
$\sum_{i=1}^n|\pi(i)-\sigma(i)|$
\\

\item{ Spearman's Rank correlation:} $\sum_{i=1}^n(\pi(i)-\sigma(i))^2$
\\

\item{Hamming Distance:} 
$\sum_{i=1}^n1\{\pi(i)\neq \sigma(i)\}$ 
\\

\item{Kendall's Tau:} Minimum number of pairwise adjacent transpositions which converts $\pi^{-1}$ into $\sigma^{-1}$.
\\

\item{Cayley's distance:} Minimum number of  adjacent transpositions which converts $\pi$ into $\sigma$
=$n-$ number of cycles in $\pi\sigma^{-1}$.
\\

\item{Ulam's distance:} 
Number of deletion-insertion operations to convert $\pi$ into $\sigma=n-$ Length of the longest increasing subsequence in $\sigma\pi^{-1}$.
\end{enumerate}
See \cite[Ch-5,6]{D} for more details  on these metrics.  It should be noted that the Spearman's Rank correlation term is the square of a metric and does not satisfy the triangle inequality, but this version is used  in the literature as it is right invariant.
If $d(.,.)$ is right invariant, then the normalizing constant is free of $\sigma$, as
$$\sum_{\pi\in S_n}e^{-\theta d(\pi,\sigma)}=\sum_{\pi\in S_n}e^{-\theta d(\pi \circ\sigma^{-1},e)}=\sum_{\pi \in S_n}e^{-\theta d(\pi,e)},$$
where $e$ is the identity permutation. Also if $\pi$ is a sample from the probability mass function $e^{-\theta d(\pi,\sigma)-Z_n(\theta)}$, then $\pi\circ \sigma^{-1}$ is a sample from the probability mass function $e^{-\theta d(\pi,e)-Z_n(\theta)}$. This paper focuses on the case where $\sigma$ is known, and carries out inference on $\theta$ when one sample $\pi$  is observed from this model. If the location parameter $\sigma$ is unknown, estimating it from one permutation $\pi$ seems impossible, unless the model puts very small mass on permutations which are away from $\sigma$, in which case $\pi$ itself is a reasonable estimate for $\sigma$. In case $\sigma$ is known, without loss of generality by a  relabeling it can be assumed that $\sigma$ is the identity permutation. 
In an attempt to cover the first two metrics  in the above list, consider an exponential family of the form
\begin{align}\label{thesis2}
\Q_{n,f,\theta}(\pi)=e^{\theta \sum_{i=1}^nf(i/n,\pi(i)/n)-Z_n(f,\theta)},
\end{align}
where $f$ is a continuous function on the unit square. In particular, if $f(x,y)=-|x-y|$ then 
$$\sum_{i=1}^nf(i/n,\pi(i)/n)=-\frac{1}{n}\sum_{i=1}^n|i-\pi(i)|,$$
which is a scaled version of the Foot rule  (see (a) in list above). For the choice $f(x,y)=-(x-y)^2$,
$$\sum_{i=1}^nf(i/n,\pi(i)/n)=-\frac{1}{n^2}\sum_{i=1}^n(i-\pi(i))^2$$
is a scaled version of Spearman's rank correlation statistic (see (b) in the list above). A simple calculation shows that the right hand side above is same as $$\frac{(n+1)(2n+1)}{3n}+\frac{2}{n^2}\sum_{i=1}^ni\pi(i),$$
and so the same model would have been obtained by setting $f(x,y)=xy$. Note that the Hamming distance (third in the list of metrics) is also of this form for the choice $f(x,y)=1_{x\neq y}$ which is a discontinuous function.

\begin{remark}
It should be noted here that the model $\Q_{n,f,\theta}$ covers a wide class of models, some of which are not unimodal. For e.g. if one sets $f(x,y)=x(1-x)y$ then for $n=7$
\begin{align*}
\sum_{i=1}^7f(i/7,j/7)=\frac{1}{7^3}\sum_{i=1}^7i(7-i)\pi(i),
\end{align*}
which is maximized when
$$\pi(7)=1,\quad \{\pi(1),\pi(6) \}=\{2,3\},\quad \{\pi(2),\pi(5) \}=\{4,5\},\quad \{\pi(3),\pi(4) \}=\{6,7\}$$
Thus for $\theta>0$ this model has $2^3=8$ modes. In general for $\theta>0$ this model has $2^{(n-1)/2}$ modes for $n$ odd, and $2^{(n-2)/2}$ modes for $n$ even.

If one assumes that for every fixed $y$ the function $x\mapsto f(x,y)$ has a unique global maximum at $x=y$, then the model $\Q_{n,f,\theta}$ is unimodal. Indeed, in this case the mode is the identity permutation $(1,2,\cdots,n)$ for $\theta>0$ and the reverse identity permutation $(n,n-1,\cdots,1)$ for $\theta<0$. Note that both the functions $f(x,y)=-(x-y)^2$ and $f(x,y)=-|x-y|$ satisfy this condition.
\end{remark}

One important comment about the model $\Q_{n,f,\theta}$ is that different choices of the function $f$ may give to the same model. Indeed as already remarked above, the function $f(x,y)=-(x-y)^2/2$ and $f(x,y)=xy$ gives rise to the same model. In general whenever $f(x,y)-g(x,y)$ can be written as $\phi(x)+\psi(y)$ for any two functions $\phi,\psi:[0,1]\mapsto \R$ the two models are the same.
In particular, the function $f(x,y)=x+y$ and $g(x,y)\equiv 0$ gives rise to the same model, which is the uniform distribution on $S_n$. The following definition
restricts the class of functions $f$ to ensure identifiability.
\begin{defn}
Let $\cC$ be the set of all continuous functions $f$ on $[0,1]^2$ which satisfy
\begin{align}\label{cnc}
\int_0^1 f(x,z)dz=0,\forall x\in [0,1];\quad \int_0^1 f(z,y)dz=0,\forall y\in [0,1],
\end{align}
 and $f$ is not identically $0$.
\end{defn}
Another set of constraints which would have served the same purpose is $f(x,0)=0,\forall x\in [0,1]; f(0,y)=0,\forall y\in [0,1]$. For the sake of definiteness this paper uses \eqref{cnc}. This mimics the condition in the discrete setting that the row and column sums of a square matrix are all $0$. It should be noted here that the function $f(x,y)=xy$ does not belong to $\cC$, and it should be replaced by the function $f(x,y)=(x-1/2)(y-1/2)$. However this is not done in sections 2 and 3 to simplify notations, on observing  that all the proofs and conclusions of this paper go through as long as $f(x,y)$ cannot be written as $\phi(x)+\psi(y)$, which is true for $f(x,y)=xy$.
\subsection{Statement of main results}
The first main result of this paper is the following theorem which computes the limiting value of the log normalizing constant of models of the form \eqref{thesis2}  for a general continuous function $f$ in terms of an optimization problem over copulas.

\begin{defn}
	Let $\sM$ denote the space of all probability distributions on the unit square with uniform marginals.
	\end{defn}
	

\begin{thm}\label{l1}
	For any function $f\in\mathcal{C}$  consider the probability model $\mathbb{Q}_{n,f,\theta}(\pi)$ as defined in \eqref{thesis2}, and $\theta\in \R$ is fixed. Then the following conclusions hold:
	
	\begin{enumerate}
		\item[(a)]
		
		\[\lim_{n\rightarrow\infty}\frac{Z_n(f,\theta)-Z_n(0)}{n}=Z(f,\theta):=\sup_{\mu\in \sM}\{\theta\mu[f]-D(\mu||u)\},\]
		where $u$ is the uniform distribution on the unit square,  $\mu[f]:=\int fd\mu$ is the expectation of $f$ with respect to the measure $\mu$, and $D(.||.)$ is the Kullback-Leibler divergence.

		\item[(b)]
		If $\pi\in S_n$ is a random permutation from the model $\Q_{n,f,\theta}$, then the  random probability measure  $$\nu_\pi:=\frac{1}{n}\sum_{i=1}^n\delta_{\Big(\frac{i}{n},\frac{\pi(i)}{n}\Big)}$$
		on the unit square converge  weakly in probability to the probability measure $\mu_{f,\theta}\in \sM$, where 
		$\mu_{f,\theta}$  the unique maximizer of part (a). 
		
	\item[(c)]	
		The measure $\mu_{f,\theta}$ of part (b) has density  $$g_{f,\theta}(x,y):=e^{\theta f(x,y)+a_{f,\theta}(x)+b_{f,\theta}(y)}$$  with respect to Lebesgue measure on $[0,1]^2$, with the functions $a_{f,\theta}(.),b_{f,\theta}(.)\in L^1[0,1]$ which are unique almost surely.  Consequently one has \[\sup_{\mu\in\sM}\{\theta \mu[f]-D(\mu||u)\}=-\int_{x=0}^1[a_{f,\theta}(x)+b_{f,\theta}(x)]dx.\]
		
		\item[(d)]
		The function $Z(f,\theta)$ of part (b) is a  differentiable convex function with  a continuous and strictly increasing derivative $Z'(f,\theta)$ which satisfies $$Z'(f,\theta)=\lim_{n\rightarrow\infty}\frac{1}{n}Z_n'(f,\theta)=\mu_{f,\theta}[f].$$ 
	
	\end{enumerate}
\end{thm}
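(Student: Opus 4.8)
\emph{Reduction.} Since $Z_n(0)=\log n!$ and $\sum_{i=1}^n f(i/n,\pi(i)/n)=n\,\nu_\pi[f]$, we have $Z_n(f,\theta)-Z_n(0)=\log\mathbb{E}\big[e^{n\theta\,\nu_\pi[f]}\big]$, the expectation taken over a uniformly random $\pi\in S_n$. Thus part (a) is a Varadhan/Laplace-type statement whose main ingredient is a large deviation estimate for the empirical measure $\nu_\pi$ under the uniform law on $S_n$, with rate function $\mu\mapsto D(\mu\|u)$ on $\sM$ and $+\infty$ on measures without uniform marginals (the latter being irrelevant, since $\nu_\pi$ always has asymptotically uniform marginals). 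The plan is: (i) prove this estimate, hence (a), by the method of types on a grid; (ii) analyze the variational problem in (a) directly to obtain existence and uniqueness of the maximizer, its density (c), and concentration (b); (iii) deduce the convexity/differentiability statements in (d).

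\emph{Method of types.} For each $k$, partition $[0,1]^2$ into $k^2$ congruent cells and, for a permutation $\pi$, record the counts $N_{ab}=\#\{i:(i/n,\pi(i)/n)\ \text{lies in cell}\ (a,b)\}$, which satisfy $\sum_b N_{ab}=\sum_a N_{ab}=n/k$ up to rounding. A Stirling estimate shows that the number of permutations with a prescribed admissible type $(N_{ab})$, divided by $n!$, equals $\exp\!\big(-n\,D(\mu^{(k)}\|u)+o(n)\big)$, where $\mu^{(k)}$ is the measure with those cell masses spread uniformly inside each cell; the existence of permutations realizing a given (approximately doubly stochastic) type is a transportation-polytope/flow rounding argument. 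Since the number of types is polynomial in $n$ for fixed $k$, summing $e^{n\theta\nu_\pi[f]}$ over types and using uniform continuity of $f$ to replace $\nu_\pi[f]$ by $\mu^{(k)}[f]$ up to a $k$-dependent $o(1)$ yields, after $n\to\infty$ and then $k\to\infty$, matching upper and lower bounds equal to $\sup_{\mu\in\sM}\{\theta\mu[f]-D(\mu\|u)\}$; the passage $k\to\infty$ uses $D(\mu^{(k)}\|u)\uparrow D(\mu\|u)$ (martingale convergence of relative entropy along the refining partitions) together with lower semicontinuity. This is the step I expect to be the main obstacle: keeping the $o(n)$ errors uniform, carrying out the lower bound by exhibiting exponentially many permutations whose empirical measure is close to a near-optimal copula, and legitimately interchanging the two limits while discarding copulas with infinite relative entropy (which do not affect the supremum).

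\emph{The variational problem, and (c), (b).} Next I would study $Z(f,\theta)=\sup_{\mu\in\sM}\{\theta\mu[f]-D(\mu\|u)\}$ intrinsically. The set $\sM$ is convex and weakly compact, $\mu\mapsto\mu[f]$ is weakly continuous (as $f$ is continuous) and $\mu\mapsto D(\mu\|u)$ is weakly lower semicontinuous and strictly convex on its domain, so a unique maximizer $\mu_{f,\theta}$ exists; since $u\in\sM$ attains value $\theta\int f\,du=0$ (because $\int f\,du=0$ for $f\in\cC$), this maximizer has finite relative entropy, hence a density $g$ with respect to $u$. The first-order optimality condition against perturbations preserving both marginals --- whose annihilator consists exactly of functions of the form $\phi(x)+\psi(y)$ --- forces $\log g=\theta f+a(x)+b(y)$ with $a,b\in L^1[0,1]$; existence of such potentials is the Sinkhorn (iterative proportional fitting) fixed point, which follows from the convergence of IPFP established later in the paper, and uniqueness up to the shift $(a,b)\mapsto(a+c,b-c)$ follows from strict convexity of $D$. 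Substituting $g=e^{\theta f+a+b}$ into $\theta\mu[f]-D(\mu\|u)$ and using $\int g(x,y)\,dy=\int g(x,y)\,dx=1$ collapses the value to $-\int_0^1[a(x)+b(x)]\,dx$, which is (c). For (b), let $A$ be the complement of a weak $\epsilon$-ball around $\mu_{f,\theta}$; applying the grid upper bound only to permutations with $\nu_\pi\in A$ gives $\mathbb{E}\big[e^{n\theta\nu_\pi[f]}\,1\{\nu_\pi\in A\}\big]\le\exp\!\big(n\sup_{\mu\in\bar A}\{\theta\mu[f]-D(\mu\|u)\}+o(n)\big)$, and that supremum is attained (upper semicontinuity on the compact set $\bar A$) and strictly below $Z(f,\theta)$ by uniqueness of the maximizer; dividing by $e^{nZ(f,\theta)+o(n)}$ shows $\Q_{n,f,\theta}(\nu_\pi\in A)\to0$ exponentially fast.

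\emph{Part (d).} Finally, $Z_n(f,\theta)$ is convex in $\theta$ by H\"older's inequality (it is the logarithm of a sum of exponentials), so its pointwise limit $Z(f,\theta)$ is convex; equivalently $Z(f,\cdot)$ is a supremum of affine functions of $\theta$. Because $\mu_{f,\theta}$ is the unique maximizer, the envelope (Danskin) theorem gives differentiability with $Z'(f,\theta)=\mu_{f,\theta}[f]$, and a convex function differentiable on an open interval is automatically $C^1$ there, giving continuity of $Z'$. Strict monotonicity of $Z'$: were it constant on an interval, $Z$ would be affine there, forcing $\mu_{f,\theta}$ to be constant in $\theta$ on that interval, contradicting that its density $e^{\theta f+a(x)+b(y)}$ genuinely varies with $\theta$ since $f$ is not of the form $\phi(x)+\psi(y)$. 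Lastly, $\tfrac1n Z_n'(f,\theta)=\mathbb{E}_{\Q_{n,f,\theta}}[\nu_\pi[f]]\to\mu_{f,\theta}[f]$ by (b) and boundedness of $f$ on the compact square (uniform integrability), and convexity alternatively upgrades $\tfrac1n Z_n\to Z$ to convergence of derivatives at every point.
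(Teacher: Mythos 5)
Your overall architecture coincides with the paper's: part (a) is proved by a method-of-types large deviation estimate for $\nu_\pi$ on a $k\times k$ grid (the paper's counting lemma in the appendix is exactly your Stirling step, via the Fisher--Yates formula, and the realizability of a prescribed near-doubly-stochastic type in the lower bound is handled by citing \cite{HKMRS}), followed by Varadhan's lemma; the interchange of the $n\to\infty$ and $k\to\infty$ limits is done through the dual representation of relative entropy over grid-measurable functions, which is the rigorous form of your "entropy increases along refining partitions" remark. Parts (b) and (d) are as you describe, except that the paper obtains $Z'(f,\theta)=\mu_{f,\theta}[f]$ by dominated convergence applied to $\frac{1}{n}Z_n'(f,\theta)=\E_{\Q_{n,f,\theta}}\nu_\pi[f]$ and then integrates, rather than invoking Danskin; your strict-monotonicity argument (equal optimizers at $\theta_1\neq\theta_2$ force $f(x,y)=\phi(x)+\psi(y)$, contradicting $f\in\cC$) is verbatim the paper's.

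The one step where your sketch would not survive scrutiny as written is part (c). Deriving $\log g=\theta f+a(x)+b(y)$ from a first-order optimality condition requires (i) knowing the optimal density is a.e.\ positive and that $\log g-\theta f$ lies in a space where the duality claim ``the annihilator of zero-marginal perturbations equals $\{\phi(x)+\psi(y)\}$'' is valid --- this sum of subspaces is closed in $L^2(u)$, but $\log g$ is a priori only measurable and possibly non-integrable, and admissible perturbations are only one-sided wherever $g$ could vanish; and (ii) an existence statement for the potentials that your citation does not supply: the IPFP convergence proved later in the paper is for finite $k\times k$ matrices, and upgrading it to $L^1$ potentials on $[0,1]$ is not immediate. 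The paper runs the implication in the opposite direction: Csisz\'ar's theorem \cite[Corollary 3.2]{Cs} directly furnishes $a_{f,\theta},b_{f,\theta}\in L^1[0,1]$ with $e^{\theta f+a_{f,\theta}+b_{f,\theta}}\,dx\,dy\in\sM$, and one then verifies that this candidate equals the maximizer by differentiating $\alpha\mapsto D(\mu_\alpha||u)-\theta\mu_\alpha[f]$ along the segment $\mu_\alpha=(1-\alpha)\mu_{a,b}+\alpha\mu_{f,\theta}$ at $\alpha=0$ (the derivative vanishes because both endpoints have uniform marginals) and invoking convexity together with the uniqueness from part (b). You should either adopt that route or supply the missing positivity, integrability, and closed-range arguments needed for your variational derivation.
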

\begin{remark}
Part (b) of the above theorem gives one way to visualize a permutation $\pi$ as a measure $\nu_\pi$ on the unit square. The appendix gives a somewhat similar way to view a permutation $\pi$ as a measure $\mu_\pi$. It also demonstrates how the measure $\nu_\pi$ looks like, when $\pi$ is a large permutation from $\Q_{n,f,\theta}$. As an example, setting $\theta=0$ one gets the uniform distribution on $S_n$, when the limiting measure $\mu_{f,\theta}$ becomes $u$ the uniform distribution on $[0,1]^2$.
Note that the theorem statement uses $Z_n(0)$ instead of $Z_n(f,0)$. This is because
$Z_n(f,0)=\log n!$ for all choices of the function $f$, and so the use of the notation $Z_n(0)$ is without loss of generality. 
\end{remark}

Focusing on inference about $\theta$ when an observation $\pi$ is obtained from the model $\Q_{n,f,\theta}$, then the following corollary of theorem \ref{l1} shows consistency of the Maximum Likelihood Estimate. In this model MLE for $\theta$ is the solution to the equation $$\Big\{\frac{1}{n} \sum\limits_{i=1}^nf(i/n,\pi(i)/n)-\frac{1}{n}Z_n'(f,\theta)\Big\}=0.$$ Since $Z_n(f,\theta)$ and $Z_n'(f,\theta)$ is hard to compute numerically, as an approximation one can replace the quantity
$\frac{1}{n}Z_n'(f,\theta)$ above by its limiting value $Z'(f,\theta)$ and then solve for $\theta$.  The following corollary shows that this estimate is consistent for $\theta$ as well.

\begin{cor}\label{thm:ldmle}
For $f\in \cC$ consider the model $\Q_{n,f,\theta}$ as in \eqref{thesis2}, and let  $\pi$ be an observations from this model. 
\begin{enumerate}[(a)]
\item
In this case  one has
$$\frac{1}{n}\sum_{i=1}^nf(i/n,\pi(i)/n)\stackrel{P}{\rightarrow}Z'(f,\theta)=\mu_{f,\theta}[f]$$ for every $\theta\in\R$.
		
		\item
		Both the expressions
		\begin{align*}ML_n(\pi,\theta):=&\frac{1}{n} \sum\limits_{i=1}^nf(i/n,\pi(i)/n)-Z_n'(f,\theta)\\
		LD_n(\pi,\theta):=&\frac{1}{n} \sum\limits_{i=1}^nf(i/n,\pi(i)/n)-Z'(f,\theta)
		\end{align*} have unique roots $\hat{\theta}_{ML}$ and $\hat{\theta}_{LD}$ with probability tending to $1$ which are consistent for $\theta$.

\item
Consider the testing problem of $\theta=\theta_0$ versus $\theta=\theta_1$ with $\theta_1> \theta_0$. Then the test $\phi_n:=1\{\hat{\theta}_{LD}>(\theta_0+\theta_1)/2\}$ is consistent, i.e.
$$\lim_{n\rightarrow\infty}\E_{\Q_{n,f,\theta_0}}\phi_n=0,\quad \lim_{n\rightarrow\infty}\E_{\Q_{n,f,\theta_1}}\phi_n=1.$$
\end{enumerate}
\end{cor}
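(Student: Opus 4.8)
The plan is to derive all three parts directly from Theorem~\ref{l1}, the only new input being routine monotonicity, continuity, and intermediate--value arguments; no further large--deviations analysis is needed. \textbf{Part (a).} Since $f$ is bounded and continuous on $[0,1]^2$, the weak convergence in probability $\nu_\pi\Rightarrow\mu_{f,\theta}$ of Theorem~\ref{l1}(b) gives $\nu_\pi[f]\stackrel{P}{\to}\mu_{f,\theta}[f]$; but $\nu_\pi[f]=\frac1n\sum_{i=1}^n f(i/n,\pi(i)/n)$ by definition of $\nu_\pi$, and $\mu_{f,\theta}[f]=Z'(f,\theta)$ by Theorem~\ref{l1}(d), which is exactly the assertion.

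\textbf{Part (b), the estimator $\hat\theta_{LD}$.} By Theorem~\ref{l1}(d), $t\mapsto Z'(f,t)$ is continuous and strictly increasing on $\R$; its range is an open interval $(m_-,m_+)$ with $m_\pm:=\sup/\inf_{\mu\in\sM}\mu[f]$, since $Z(f,t)\le tm_+$ while $Z(f,t)\ge t\mu[f]-D(\mu\|u)$ for densities $\mu$ with $\mu[f]$ arbitrarily close to $m_+$, forcing $Z(f,t)/t\to m_+$ and hence $Z'(f,t)\to m_+$ as $t\to\infty$ (symmetrically at $-\infty$), and $m_-<m_+$ because $f\in\cC$ with $f\not\equiv 0$ cannot be written as $\phi(x)+\psi(y)$, which would be needed for $\mu[f]$ to be constant over $\sM$. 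Consequently $t\mapsto LD_n(\pi,t)=\nu_\pi[f]-Z'(f,t)$ is continuous and strictly decreasing, so it has at most one root, and exactly one precisely when $\nu_\pi[f]\in(m_-,m_+)$, in which case $\hat\theta_{LD}=\big(Z'(f,\cdot)\big)^{-1}\!\big(\nu_\pi[f]\big)$. By part (a), $\nu_\pi[f]\stackrel{P}{\to}Z'(f,\theta)\in(m_-,m_+)$, so the root exists with probability tending to $1$, and continuity of $\big(Z'(f,\cdot)\big)^{-1}$ with the continuous mapping theorem gives $\hat\theta_{LD}\stackrel{P}{\to}\theta$.

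\textbf{Part (b), the estimator $\hat\theta_{ML}$.} Here $t\mapsto Z_n(f,t)$ is convex (log of a sum of exponentials), so $t\mapsto\frac1n Z_n'(f,t)$ is non-decreasing, and in fact strictly increasing because $Z_n''(f,t)=\operatorname{Var}_{\Q_{n,f,t}}\!\big(\sum_i f(i/n,\pi(i)/n)\big)>0$ once $n$ is large: if $\sum_i f(i/n,\pi(i)/n)$ were constant on $S_n$ for arbitrarily large $n$, evaluating it on pairs of permutations differing by a transposition would force $f(a,c)+f(b,d)=f(a,d)+f(b,c)$ on a dense subset of $[0,1]^4$, hence $f(x,y)=\phi(x)+\psi(y)$, contradicting $f\in\cC$, $f\not\equiv0$. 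Thus $t\mapsto ML_n(\pi,t)=\nu_\pi[f]-\frac1n Z_n'(f,t)$ is continuous and strictly decreasing, so it has at most one root; for existence fix $\varepsilon>0$, note $\frac1n Z_n'(f,\theta\mp\varepsilon)\to Z'(f,\theta\mp\varepsilon)$ by Theorem~\ref{l1}(d), so with part (a) $ML_n(\pi,\theta-\varepsilon)\stackrel{P}{\to}Z'(f,\theta)-Z'(f,\theta-\varepsilon)>0$ and $ML_n(\pi,\theta+\varepsilon)\stackrel{P}{\to}Z'(f,\theta)-Z'(f,\theta+\varepsilon)<0$, so with probability tending to $1$ the unique root $\hat\theta_{ML}$ lies in $(\theta-\varepsilon,\theta+\varepsilon)$; letting $\varepsilon\downarrow0$ yields $\hat\theta_{ML}\stackrel{P}{\to}\theta$.

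\textbf{Part (c) and the main difficulty.} Part (c) is immediate from the consistency of $\hat\theta_{LD}$: under $\Q_{n,f,\theta_0}$ we get $\hat\theta_{LD}\stackrel{P}{\to}\theta_0<(\theta_0+\theta_1)/2$, so $\phi_n\stackrel{P}{\to}0$ and, as $0\le\phi_n\le1$, $\E_{\Q_{n,f,\theta_0}}\phi_n\to0$; symmetrically $\E_{\Q_{n,f,\theta_1}}\phi_n\to1$. The one genuinely delicate point is the \emph{uniqueness} of the roots in part (b): for $\hat\theta_{LD}$ it requires that $\mu[f]$ be non-constant over $\sM$ and that $\nu_\pi[f]$ sit strictly inside the range of $Z'(f,\cdot)$, and for $\hat\theta_{ML}$ it requires a positive tilted variance of the sufficient statistic for all large $n$. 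Both reduce to excluding the decomposition $f(x,y)=\phi(x)+\psi(y)$, which is precisely ruled out by $f\in\cC$ together with $f\not\equiv0$; everything else is bookkeeping built on top of Theorem~\ref{l1}.
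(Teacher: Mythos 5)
Your proposal is correct and follows essentially the same route as the paper: part (a) from the weak convergence $\nu_\pi\Rightarrow\mu_{f,\theta}$ together with $Z'(f,\theta)=\mu_{f,\theta}[f]$, part (b) from the sign change of the estimating function at $\theta\pm\varepsilon$ combined with monotonicity, and part (c) as an immediate consequence of consistency of $\hat\theta_{LD}$. The only difference is that you supply details the paper leaves implicit --- in particular the strict positivity of $Z_n''(f,t)=\operatorname{Var}_{\Q_{n,f,t}}(\sum_i f(i/n,\pi(i)/n))$ needed for uniqueness of $\hat\theta_{ML}$ (the paper just says the argument "follows verbatim"), and the identification of the range of $Z'(f,\cdot)$ for the existence of $\hat\theta_{LD}$ --- and these fill-ins are correct.
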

The above corollary  shows that it is possible to estimate the parameter $\theta$ consistently with just one observation from the model $\Q_{n,f,\theta}$. No error rates can be obtained for the estimates $\{\hat{\theta}_{ML},\hat{\theta}_{LD}\}$ as part (a) of theorem \ref{l1} does not have any error rates.
 Thus a good approximation of the limiting log normalizing constant will lead to an efficient estimator for $\theta$, in the sense that the estimator will be close to the MLE. The definition of $Z(f,\theta)$
is in terms of an optimization problem over $\sM$, which is an infinite dimensional space. In general, such optimization can be hard to carry out. The next theorem  gives an iterative algorithm for computing the density of the optimizing measure $\mu_{f,\theta}$ with respect to Lebesgue measure.
Intuitively the algorithm starts with the function $e^{\theta f(x,y)}$ and alternately scales it along $x$ and $y$ marginals to produce uniform marginals in the limit. 
\begin{defn}\label{def:mk}
For any integer $k\ge 1$ let $\cM_k$ denote the set of all $k\times k$ matrices with non negative entries with both row and column sums equal to $1/k$. 
\end{defn}

\begin{thm}\label{approximate}
	\begin{enumerate}[(a)]
	\item
		Define a sequence  of $k\times k$ matrices by setting $B_0(r,s):=e^{f(r/k,s/k)}$ for $1\le r,s\le k$, and 
		 \[B_{2m+1}(r,s):=\frac{B_{2m}(r,s)}{k\sum_{l=1}^mB_{2m}(r,l)},B_{2m+2}(r,s):=\frac{B_{2m+1}(r,s)}{k\sum_{l=1}^mB_{2m+1}(l,s)}.\] 
		Then there exists a matrix $A_{k,\theta}\in \cM_k$ such that
		$\lim_{m\rightarrow\infty}B_m=A_k$.
		
		\item
		 $A_{k,\theta}\in \cM_k$ is the unique maximizer of the optimization problem
		$$\sup_{A\in \cM_k}\Big\{\theta \sum_{r,s=1}^kf(r/k,s/k)A(r,s)-2\log k-\sum_{r,s=1}^kA(r,s)\log A(r,s)\Big\}.$$
		
		\item
		The function $$W_k(f,\theta):=\sup_{A\in \cM_k}\Big\{\theta\sum_{r,s=1}^kf(r/k,s/k)A(r,s)-2\log k-\sum_{r,s=1}^kA(r,s)\log A(r,s)\Big\}.$$
		
		is  a convex differentiable function with $$W_k'(f,\theta)= \sum_{r,s=1}^kA_{k,\theta}(r,s)f(r/k,s/k).$$

		\item[(d)]
		
		Finally, for any continuous function $\phi:[0,1]^2\mapsto \R$ one has 
		$$\lim_{k\rightarrow\infty}\sum_{r,s=1}^kA_{k,\theta}(r,s)\phi(r/k,s/k)=\int_{[0,1]^2}\phi(x,y)g_{f,\theta}(x,y)dxdy.$$ In particular this implies
	$$Z(f,\theta)=\lim_{k\rightarrow\infty}W_k(f,\theta)=\lim_{k\rightarrow\infty}\lim_{m\rightarrow\infty}\Big\{\theta\sum_{i,j=1}^kf(i/k,j/k)B_m(i,j)-2\log k-\sum_{i,j=1}^k B_m(i,j)\log B_m(i,j)\Big\} .$$
	\end{enumerate}
\end{thm}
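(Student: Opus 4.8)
The plan is to establish parts (b)--(d) first and to read off (a) in the process, since the variational description in (b) is what drives everything. For (b), note that the objective $H(A):=\theta\sum_{r,s}f(r/k,s/k)A(r,s)-2\log k-\sum_{r,s}A(r,s)\log A(r,s)$ (with the convention $0\log 0=0$) is continuous on the compact convex set $\cM_k$ and strictly concave, because $t\mapsto -t\log t$ is strictly concave on $[0,\infty)$ while the other terms are affine; hence there is a unique maximizer $A_{k,\theta}$. It cannot have a vanishing entry: writing $J_k:=(1/k^2)_{r,s}\in\cM_k$, the right derivative at $0$ of $\epsilon\mapsto H\big((1-\epsilon)A_{k,\theta}+\epsilon J_k\big)$ would be $+\infty$ if some entry were $0$, contradicting optimality. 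Thus $A_{k,\theta}$ is interior, and the Lagrange conditions for the $2k$ marginal constraints give $A_{k,\theta}(r,s)=e^{\theta f(r/k,s/k)}\alpha_r\beta_s$ with all $\alpha_r,\beta_s>0$; conversely, any matrix of this form lying in $\cM_k$ satisfies the (concave) optimality system and hence equals $A_{k,\theta}$. This uniqueness of the diagonal scaling of $\big(e^{\theta f(r/k,s/k)}\big)$ with uniform marginals is precisely what part (a) requires. Part (c) then follows at once: $W_k(f,\cdot)$ is a supremum over the fixed compact set $\cM_k$ of functions affine in $\theta$, hence convex, and since the maximizer is unique for each $\theta$, Danskin's envelope theorem gives $W_k'(f,\theta)=\sum_{r,s}A_{k,\theta}(r,s)f(r/k,s/k)$.

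For (a) I would recognize the recursion as iterative proportional fitting, i.e.\ as alternating information projections: minimizing the Kullback--Leibler divergence $D(\cdot\|B)$ over matrices with prescribed row sums $1/k$ decouples over rows and yields exactly the row normalization, so $B_{2m+1}$ is the $I$-projection of $B_{2m}$ onto the affine family $\mathcal R$ of matrices with all row sums $1/k$, and $B_{2m+2}$ the $I$-projection of $B_{2m+1}$ onto the family $\mathcal C$ with all column sums $1/k$, with $\cM_k=\mathcal R\cap\mathcal C\ni J_k$. Csisz\'ar's Pythagorean identity gives $D(J_k\|B_m)=D(J_k\|B_{m+1})+D(B_{m+1}\|B_m)$ for all $m$; summing yields $\sum_m D(B_{m+1}\|B_m)\le D(J_k\|B_0)<\infty$, so $D(B_{m+1}\|B_m)\to 0$, while $D(J_k\|B_m)\le D(J_k\|B_0)$ keeps $\sum_{r,s}\big(-\log B_m(r,s)\big)$ bounded and hence, together with the bound $B_m(r,s)\le 1/k$ valid for $m\ge 1$, confines all entries of $\{B_m\}_{m\ge 1}$ to a fixed interval $[\varepsilon_k,1/k]$. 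The sequence is thus precompact; $D(B_{m+1}\|B_m)\to 0$ forces $B_{m+1}-B_m\to 0$ on this set; and every subsequential limit is a diagonal scaling of $\big(e^{\theta f(r/k,s/k)}\big)$ with column sums $1/k$ and, since one further row normalization moves it by $o(1)$, also row sums $1/k$, so it lies in $\cM_k$ and equals $A_{k,\theta}$ by the uniqueness above. Hence $B_m\to A_{k,\theta}$.

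For (d) I would prove two matching estimates. Upper bound: turn $A_{k,\theta}$ into $\mu_k\in\sM$ with piecewise constant density $k^2 A_{k,\theta}(r,s)$ on the cell $((r-1)/k,r/k]\times((s-1)/k,s/k]$; its marginals are uniform exactly because the row and column sums of $A_{k,\theta}$ equal $1/k$, and a direct computation in which the $\log k^2$ terms cancel $2\log k$ (as $\sum_{r,s}A_{k,\theta}(r,s)=1$) gives $\theta\mu_k[f]-D(\mu_k\|u)=W_k(f,\theta)+\theta\sum_{r,s}A_{k,\theta}(r,s)\big(\bar f^{(k)}_{r,s}-f(r/k,s/k)\big)$, where $\bar f^{(k)}_{r,s}$ is the mean of $f$ over that cell; by uniform continuity of $f$ the remainder is $O\big(|\theta|\,\omega_f(1/k)\big)$, so $\theta\mu_k[f]-D(\mu_k\|u)=W_k(f,\theta)+o(1)$ and, since the left side is $\le Z(f,\theta)$, also $\limsup_k W_k(f,\theta)\le Z(f,\theta)$. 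Lower bound: discretize the continuous optimizer by setting $A_k(r,s):=\int_{((r-1)/k,r/k]\times((s-1)/k,s/k]}g_{f,\theta}$, which lies in $\cM_k$ because $g_{f,\theta}$ has uniform marginals (Theorem \ref{l1}(c)); then $W_k(f,\theta)\ge \theta\sum_{r,s}A_k(r,s)f(r/k,s/k)-2\log k-\sum_{r,s}A_k(r,s)\log A_k(r,s)$, where the linear term equals $\theta\int g_{f,\theta}\,\tilde f_k$ for the piecewise constant $\tilde f_k$ with $\tilde f_k\to f$ uniformly, hence tends to $\theta\mu_{f,\theta}[f]$, while the entropy term equals $-\int \bar g_k\log\bar g_k$ for the cell average $\bar g_k$ of $g_{f,\theta}$ and is $\ge -D(\mu_{f,\theta}\|u)$ by Jensen's inequality for the convex map $t\mapsto t\log t$ applied to the cell-averaging (conditional expectation) operator, using $D(\mu_{f,\theta}\|u)=\int g_{f,\theta}\log g_{f,\theta}<\infty$ (finite since $Z(f,\theta)$ is finite and $f$ bounded); hence $\liminf_k W_k(f,\theta)\ge Z(f,\theta)$. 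Combining, $W_k(f,\theta)\to Z(f,\theta)$.

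To obtain the displayed limit for a general continuous $\phi$, observe that the measures $\mu_k$ above are tight on $[0,1]^2$; any weak subsequential limit $\nu$ lies in $\sM$, and from $\theta\mu_k[f]-D(\mu_k\|u)\to Z(f,\theta)$ together with continuity of $\mu\mapsto\mu[f]$ and lower semicontinuity of $\mu\mapsto D(\mu\|u)$ one gets $\theta\nu[f]-D(\nu\|u)\ge Z(f,\theta)=\sup_{\mu\in\sM}\{\theta\mu[f]-D(\mu\|u)\}$, forcing $\nu=\mu_{f,\theta}$ by uniqueness of the maximizer (Theorem \ref{l1}(b)); hence $\mu_k\Rightarrow\mu_{f,\theta}$ and $\sum_{r,s}A_{k,\theta}(r,s)\phi(r/k,s/k)=\mu_k[\phi]+O\big(\omega_\phi(1/k)\big)\to\int_{[0,1]^2}\phi\,g_{f,\theta}$. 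Finally, part (b) identifies $W_k(f,\theta)$ with the value of the bracketed expression at $A_{k,\theta}$, and by part (a) $B_m\to A_{k,\theta}$ with all entries in $[\varepsilon_k,1/k]$, a set on which $t\mapsto t\log t$ is continuous and bounded, so the bracketed expression evaluated at $B_m$ converges to $W_k(f,\theta)$ as $m\to\infty$; combined with $W_k(f,\theta)\to Z(f,\theta)$ this is the asserted double limit. The step requiring the most care is (a) --- recognizing the alternating $I$-projection structure and extracting both the summability $\sum_m D(B_{m+1}\|B_m)<\infty$ and the a priori entrywise bounds from the single Pythagorean identity --- with the two-sided comparison in (d), and in particular the Jensen and lower-semicontinuity bookkeeping for the entropy functional under discretization, a close second.
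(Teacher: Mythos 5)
Your proposal is correct, and in two places it takes a genuinely different route from the paper. For part (a) the paper simply invokes Sinkhorn's theorem to get $B_m\to\Lambda_1B_0\Lambda_2\in\cM_k$, and for part (b) it combines this with Csisz\'ar's characterization of the $I$-projection to identify the limit with the maximizer; you instead prove the convergence from scratch by recognizing the recursion as alternating $I$-projections onto the row- and column-constraint families, extracting summability of $\sum_m D(B_{m+1}\|B_m)$ and the a priori entrywise bounds from the Pythagorean identity against $J_k$, and closing the loop with the KKT characterization from your part (b). This is essentially a self-contained proof of the Sinkhorn/Csisz\'ar result the paper cites; it costs more work but removes the external dependence (the only small point to tidy is that $B_0$ is not normalized, so one should run the Pythagorean bookkeeping from $B_1$ on, and one should note that the diagonal scalings can be normalized so that they too converge along subsequences). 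The larger divergence is in part (d): the paper proves $W_k(f,\theta)\to Z(f,\theta)$ probabilistically, via the contraction principle applied to the coarse-grained matrix $M(\pi)/n$, Varadhan's lemma, and the comparison $|W_k-Z|\le|\theta|\epsilon_k$ between the coarse-grained and exact Hamiltonians; you prove it by a purely analytic two-sided discretization --- embedding $A_{k,\theta}$ as a piecewise-constant copula for the upper bound, and discretizing $g_{f,\theta}$ with Jensen's inequality for the entropy term for the lower bound. Both give the same quantitative $O(|\theta|\omega_f(1/k))$ control; your route avoids the large deviation machinery entirely and is arguably cleaner, since the paper's write-up of (d) first assumes $p_{A_{k,\theta}}\Rightarrow\mu_{f,\theta}$ and defers its proof, whereas you derive the weak convergence directly from the variational convergence. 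Parts (b), (c) (Danskin) and the final tightness/lower-semicontinuity step agree with the paper. One cosmetic remark: you read $B_0(r,s)=e^{\theta f(r/k,s/k)}$ where the statement omits the $\theta$; that is the intended reading (the paper's Section 2 uses $e^{(\theta/k^2)ij}$), so no harm done.
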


\begin{remark}\label{april}
	Since $g_{f,\theta}(x,y)=e^{a_{f,\theta}(x)+b_{f,\theta}(y)+\theta f(x,y)}$ has uniform marginals, the functions $a_{f,\theta}(.)$ and $b_{f,\theta}(.)$ are the solutions to the joint integral equations
	\[\int_{0}^1e^{\theta f(x,z)+a_{f,\theta}(x)+b_{f,\theta}(z)}dz=1,\quad \int_{0}^1e^{\theta f(z,y)+a_{f,\theta}(z)+b_{f,\theta}(y)}dz=1, \text{ for all }x,y\in [0,1].\] By  theorem  \ref{approximate}, it follows that \[\lim_{n\rightarrow\infty}\frac{Z_n(f,\theta)-Z_n(0)}{n}=-\int_{x=0}^1[a_{f,\theta}(x)+b_{f,\theta}(x)]dx.\]
	For the limiting normalizing constant in the Mallows model with the Foot-rule or the Spearman's rank correlation one needs to take $f(x,y)=-|x-y|$ and $f(x,y)=-(x-y)^2 (\text{ or } f(x,y)=xy)$ respectively. Even though analytic computation for $a_{f,\theta}(.),b_{f,\theta}(.)$ might be difficult, the algorithm of theorem \ref{approximate} (known as IPFP) can be used for a numerical evaluation of these functions. Iterative Proportional Fitting Procedure (IPFP) originated  in the works of  Deming and Stephan (\cite{DS}) in 1940. For more background on IPFP see \cite{Cs,Kb,Ru,Sink} and the references there-in. Theorem \ref{approximate} gives a way to approximate numerically the limiting log partition function by fixing $k$ large and running the IPFP for $m$ iterations with a suitably large $m$.
\end{remark}

Another approach for estimation in such models can be to estimate the parameter $\theta$ without estimating the normalizing constant. The following theorem constructs an explicit $\sqrt{n}$ consistent estimator for $\theta$, for the class of models considered in theorem \ref{l1}.
This estimate is similar in spirit to Besag's pseudo-likelihood estimator \cite{B1,B2}.  The pseudo-likelihood is defined to be the product of all  one dimensional conditional distributions, one for each random variable. 
Since in a permutation the conditional distribution of $\pi(i)$ given $\{\pi(j),j\ne i\}$  determines the value of  $\pi(i)$, it does not make sense to look at the conditional distribution  $(\pi(i)|\pi(j),j\neq i)$. In this case a meaningful thing  to consider is the distribution of $(\pi(i),\pi(j)|\pi(k),k\neq i,j)$, which gives the pseudo-likelihood as 
\[\prod_{1\le i<j\le n}\Q_{n,f,\theta}(\pi(i),\pi(j)|\pi(k),k\ne i,j).\]
The pseudo-likelihood estimate  $\hat{\theta}_{PL}$ is obtained by maximizing the above expression. Taking the $\log$ of the pseudo-likelihood and differentiating with respect to $\theta$ gives 
\[\sum_{1\le i<j\le n}y_\pi(i,j)\frac{1}{1+e^{\theta y_\pi(i,j)}},\]
where $$y_\pi(i,j):=f(i/n,\pi(i)/n)+f(j/n,\pi(j)/n)-f(i/n,\pi(j)/n)-f(j/n,\pi(i)/n).$$
The pseudo-likelihood estimate can then be obtained by equating this to $0$ and solving for $\theta$. One way of computing this estimate is a grid search over $\R$ and does not require the computation of $Z_n(f,\theta)$. Thus  this gives a fast and practical way for parameter estimation in such models. The next theorem gives error rates for the pseudo-likelihood estimator.
\begin{thm}\label{est}
	
		For  $f\in \cC$ consider the model $\Q_{n,f,\theta}$ of \eqref{thesis2}, and let $\pi$ be a sample from $\Q_{n,f,\theta}$.
		Setting $y_\pi(i,j)=:f(i/n,\pi(i)/n)+f(j/n,\pi(j)/n)-f(i/n,\pi(j)/n)-f(j/n,\pi(i)/n)$ the expression 
				\begin{align*}
		PL_n(\pi,\theta):=\sum_{1\le i<j\le n}y_\pi(i,j)
		 \frac{1}{1+e^{\theta y_\pi(i,j)}}.
		\end{align*}
		 has a unique root  in $\theta$ with probability tending to $1$. 
		Further, denoting this root by $\hat{\theta}_n$ one has    $\sqrt{n}(\hat{\theta}-\theta)$ is $O_P(1)$.
	
\end{thm}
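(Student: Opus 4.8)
\emph{Proof plan.} Write $\theta_0$ for the true parameter, $M:=\|f\|_\infty$, and $T_{ij}:=y_\pi(i,j)/(1+e^{\theta_0 y_\pi(i,j)})$, so that $PL_n(\pi,\theta_0)=\sum_{i<j}T_{ij}$; note $|y_\pi(i,j)|\le 4M$ and hence $|T_{ij}|\le 4M$. The approach is a one--step $M$-estimation argument. For every fixed $\pi$ the map $\theta\mapsto PL_n(\pi,\theta)$ is $C^1$ with
\[\frac{\partial}{\partial\theta}PL_n(\pi,\theta)=-\sum_{1\le i<j\le n}y_\pi(i,j)^2\,\frac{e^{\theta y_\pi(i,j)}}{(1+e^{\theta y_\pi(i,j)})^2}\le 0,\]
the inequality being strict (for every $\theta$) on the event $E_n:=\{\,y_\pi(i,j)\ne0\text{ for some }i<j\,\}$; so on $E_n$ the function $\theta\mapsto PL_n(\pi,\theta)$ is continuous and strictly decreasing, hence has at most one zero. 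The remaining work is to (i) show the score $PL_n(\pi,\theta_0)$ is $O_P(n^{3/2})$, (ii) show $-\partial_\theta PL_n(\pi,\theta)$ is bounded below by a positive constant times $n^2$ uniformly for $\theta$ in a neighbourhood of $\theta_0$, and (iii) combine these into localization of the root and the rate.

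For (i), fix $i<j$ and condition on $\mathcal F_{ij}:=\sigma(\pi(k):k\ne i,j)$. Given $\mathcal F_{ij}$, the pair $(\pi(i),\pi(j))$ takes only two values $(a,b)$ and $(b,a)$, and since all exponent terms other than those at $i$ and $j$ cancel in the likelihood ratio, $\Q_{n,f,\theta_0}(\pi(i)=a,\pi(j)=b\mid\mathcal F_{ij})=(1+e^{-\theta_0 D})^{-1}$ with $D:=f(i/n,a/n)+f(j/n,b/n)-f(i/n,b/n)-f(j/n,a/n)$, while $y_\pi(i,j)$ equals $D$ in the first configuration and $-D$ in the second. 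A one-line computation then yields $\E[T_{ij}\mid\mathcal F_{ij}]=0$, hence $\E\,PL_n(\pi,\theta_0)=0$. The key observation is that whenever $\{i,j\}\cap\{i',j'\}=\emptyset$ the variable $T_{i'j'}$ is $\mathcal F_{ij}$-measurable, so $\E[T_{ij}T_{i'j'}]=\E[T_{i'j'}\E(T_{ij}\mid\mathcal F_{ij})]=0$; therefore only overlapping index-pairs contribute to the variance,
\[\mathrm{Var}\,PL_n(\pi,\theta_0)=\sum_{\substack{i<j,\ i'<j'\\ \{i,j\}\cap\{i',j'\}\ne\emptyset}}\mathrm{Cov}(T_{ij},T_{i'j'}),\]
and since there are only $O(n^3)$ such ordered pairs-of-pairs, each covariance being $O(1)$ by the uniform bound on $T_{ij}$, we get $\mathrm{Var}\,PL_n(\pi,\theta_0)=O(n^3)$, so $PL_n(\pi,\theta_0)=O_P(n^{3/2})$ by Chebyshev.

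For (ii), since $|u|\le B$ implies $e^u/(1+e^u)^2\ge\kappa(B)>0$, applying this with $u=\theta y_\pi(i,j)$ and $B=(|\theta_0|+\delta)4M$ gives $-\partial_\theta PL_n(\pi,\theta)\ge\kappa\sum_{i<j}y_\pi(i,j)^2$ for all $\theta\in[\theta_0-\delta,\theta_0+\delta]$. Writing $q\big((x_1,y_1),(x_2,y_2)\big):=f(x_1,y_1)+f(x_2,y_2)-f(x_1,y_2)-f(x_2,y_1)$, which is bounded, continuous, and vanishes on the diagonal, one has $\frac{1}{n^2}\sum_{i<j}y_\pi(i,j)^2=\tfrac12\iint q^2\,d\nu_\pi\,d\nu_\pi$. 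By part (b) of Theorem~\ref{l1}, $\nu_\pi$ converges weakly in probability to $\mu_{f,\theta_0}$, hence $\nu_\pi\otimes\nu_\pi\Rightarrow\mu_{f,\theta_0}\otimes\mu_{f,\theta_0}$ in probability, so $\frac{1}{n^2}\sum_{i<j}y_\pi(i,j)^2$ converges in probability to $\tfrac12\iint q^2\,d\mu_{f,\theta_0}\,d\mu_{f,\theta_0}$. This limit is strictly positive: by Theorem~\ref{l1}(c) the measure $\mu_{f,\theta_0}$ has a Lebesgue-a.e.\ positive density, so the limit could vanish only if $q\equiv0$ on $[0,1]^4$, i.e.\ only if $f(x,y)=\phi(x)+\psi(y)$ for continuous $\phi,\psi$, and then \eqref{cnc} forces $\phi,\psi$ constant and $f\equiv0$, contradicting $f\in\cC$. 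Hence there is $c_0>0$ such that, with probability tending to $1$, $-\partial_\theta PL_n(\pi,\theta)\ge c_0n^2$ for all $\theta\in[\theta_0-\delta,\theta_0+\delta]$; in particular $E_n$ holds with probability tending to $1$, which already gives the uniqueness claim.

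Finally, on the event (of probability tending to $1$) where $-\partial_\theta PL_n(\pi,\cdot)\ge c_0n^2$ on $[\theta_0-\delta,\theta_0+\delta]$ and $|PL_n(\pi,\theta_0)|\le c_0n^2\delta/2$, strict monotonicity gives $PL_n(\pi,\theta_0+\delta)<0<PL_n(\pi,\theta_0-\delta)$, so $PL_n(\pi,\cdot)$ has its unique zero $\hat\theta_n$ in $(\theta_0-\delta,\theta_0+\delta)$; a Taylor expansion $0=PL_n(\pi,\hat\theta_n)=PL_n(\pi,\theta_0)+(\hat\theta_n-\theta_0)\partial_\theta PL_n(\pi,\tilde\theta_n)$ with $\tilde\theta_n$ in that interval then yields $|\hat\theta_n-\theta_0|\le|PL_n(\pi,\theta_0)|/(c_0n^2)=O_P(n^{3/2})/(c_0n^2)=O_P(n^{-1/2})$, i.e.\ $\sqrt n(\hat\theta_n-\theta_0)=O_P(1)$. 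The one nonroutine step is the variance bound in (i): the naive bound on $\mathrm{Var}\,PL_n$ is $O(n^4)$, which is far too weak, and it is essential to condition on all but two of the values $\pi(k)$ so that $T_{ij}$ becomes conditionally centred — this is exactly what makes the covariance of any two disjoint index-pairs vanish and drops the variance to $O(n^3)$, matching the $\sqrt n$ rate against the $\Theta(n^2)$ denominator; the mean-zero identity, the counting of overlapping pairs, and the derivative lower bound via Theorem~\ref{l1}(b) are then straightforward.
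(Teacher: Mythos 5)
Your argument is correct, and its skeleton --- score $PL_n(\pi,\theta_0)=O_P(n^{3/2})$, negative derivative bounded below by a constant times $n^2$ with probability tending to one, then localization plus the mean value theorem --- is exactly the content of the paper's Lemma \ref{general} and its verification. The place where you genuinely diverge is the second-moment bound for the score, which is the one nonroutine step. The paper proves $\E_{\Q_{n,f,\theta_0}}PL_n(\pi,\theta_0)^2\le Cn^3$ by Chatterjee's exchangeable-pair method: it resamples $(\pi(I),\pi(J))$ at a uniformly chosen pair $(I,J)$ from its conditional law, observes that $\E[W(\pi)-W(\pi')\mid\pi]=PL_n(\pi,\theta_0)/\binom{n}{2}$, and uses exchangeability and antisymmetry to rewrite $\E PL_n^2$ as $\tfrac{N_n}{2}\E\big[(PL_n(\pi,\theta_0)-PL_n(\pi',\theta_0))(W(\pi)-W(\pi'))\big]$, which is then bounded termwise by $4n^3M^2$. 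You instead note directly that each summand $T_{ij}$ is centred conditionally on $\sigma(\pi(k):k\ne i,j)$ and that $T_{i'j'}$ is measurable with respect to that $\sigma$-field whenever $\{i,j\}\cap\{i',j'\}=\emptyset$, so only the $O(n^3)$ overlapping pairs contribute to the variance. Both routes exploit the same structural fact --- that $y_\pi(i,j)/(1+e^{\theta_0 y_\pi(i,j)})$ is the centred score of the two-point conditional law of $(\pi(i),\pi(j))$ --- and both give the same $O(n^3)$ bound; yours is the more elementary and self-contained (no auxiliary Markov kernel, and it additionally shows the score has exactly mean zero), while the paper's exchangeable-pair identity is the more portable device for models where the conditional law is not available in closed form. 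The remaining steps in your write-up (global uniqueness from strict monotonicity on the event that some $y_\pi(i,j)\ne 0$, the lower bound on $-\partial_\theta PL_n$ via $\nu_\pi\otimes\nu_\pi\Rightarrow\mu_{f,\theta_0}\otimes\mu_{f,\theta_0}$, and the positivity of the limit by integrating the identity $f(x_1,y_1)+f(x_2,y_2)=f(x_1,y_2)+f(x_2,y_1)$ against the constraints \eqref{cnc}) coincide with the paper's.
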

The estimating equations $LD_n(\pi,\theta),ML_n(\pi,\theta)$ of Corollary \ref{thm:ldmle} and $PL_n(\pi,\theta)$ of theorem \ref{est} are stated when a single permutation $\pi$ is observed from $\Q_{n,f,\theta}$. If i.i.d. samples $\pi^{(1)},\cdots,\pi^{(m)}$ are observed from $\Q_{n,f,\theta}$ one should use the equations $$\sum_{l=1}^mLD_n(\pi^{(l)},\theta),\quad  \sum_{l=1}^m ML_n(\pi^{(l)},\theta), \quad\sum_{l=1}^mPL_n(\pi^{(l)},\theta)$$ instead.

So far all results  relate to the model $\Q_{n,f,\theta}$ as defined in (\ref{thesis2}). To demonstrate that the tools used to prove these results are quite robust, the next proposition analyzes the Mallows model with Kendall's Tau as the metric (item (d) in the original list of metrics). 
\begin{ppn}\label{mallow}
	Consider the Mallows model on $S_n$ with Kendall's tau as the metric, defined by  \[M_{n,\theta}(\pi)=e^{\frac{\theta}{n} Inv(\pi)-C_n(\theta)},\quad Inv(\pi):=\sum_{i<j}1_{\pi(i)>\pi(j)},\] 
	where $C_n(\theta)$ is the normalizing constant. Also let $h:[0,1]^4\mapsto \R$ denote the function
	$$ h((x_1,y_1),(x_2,y_2)):=1_{(x_1-x_2)(y_1-y_2)<0},$$
	
	\begin{enumerate}[(a)]
	\item
	In this case one has
	\[\lim_{n\rightarrow\infty}\frac{C_n(\theta)-C_n(0)}{n}=C(\theta):=\sup_{\mu\in \sM}\Big\{\frac{\theta}{2}(\mu\times\mu)(h)-D(\mu||u)\Big\}.\]

Further, 
	the supremum above is attained at a unique measure on the unit square given by the density
	$$\rho_\theta(x,y):=\frac{\frac{\theta}{2}\sinh\frac{\theta}{2}}{\Big[e^{-\frac{\theta}{4}}\cosh(\frac{\theta(x-y)}{2})+e^{\frac{\theta}{4}}\cosh(\frac{\theta(x+y-1)}{2})\Big]^2},$$
	and consequently $C(\theta)=\int_0^1 \frac{e^{\theta x}-1}{\theta x}dx$.

	\item
	If $\pi$ is a sample from $M_{n,\theta}$, then both the  expressions
	\begin{align*}
	\widetilde{ML}_n(\pi,\theta):=& \frac{1}{n^2}Inv(\pi)-C_n'(\theta),\\
	\widetilde{LD}_n(\pi,\theta):=& \frac{1}{n^2}Inv(\pi)-C'(\theta)
	\end{align*}
	have unique roots $\widetilde{\theta}_{ML}$ and $\widetilde{\theta}_{LD}$ with probability tending to $1$ which are consistent for $\theta$.
	
	\end{enumerate}

	
	

\end{ppn}
\begin{remark}

Since for the Mallows model with Kendall's Tau as metric the partition function $C_n(\theta)$ is explicitly known, the formula for $C(\theta)$ can be computed easily. In this case by a direct argument one can show that $\widetilde{\theta}_{LD},\widetilde{\theta}_{ML}$ are $\sqrt{n}$ consistent. The theorem shows that the general tools developed in this paper can also be used to show consistency, even though establishing optimal rates requires finer results.
\end{remark}

Even though the Mallows model with Kendall's Tau is not in the setting of Theorem \ref{l1}, estimation of the log normalization constant is still possible using results of this paper. This is 
because the
function $$\mu\mapsto \int_{[0,1]^4}1_{(x_1-x_2)(y_1-y_2)<0}d\mu(x_1,y_1)d\mu(x_2,y_2)$$ is continuous on $\sM$ with respect to weak convergence, and 
is a natural extension for the number of inversions of a permutation to a general probability measure  in $\sM$.
Thus to explore other non uniform models on permutations, one needs to understand the continuous real valued functionals on $\sM$. 
For an example of a natural function on permutations which is not continuous, let $N(\pi)$ denote the number of fixed points of $\pi$.  Then the function $\pi\mapsto N(\pi)/n$ is not continuous  on $\sM$.  
Indeed, its natural analogue on $\sM$ is the function
$$\mu\mapsto \int_{[0,1]^2}1_{x=y}d\mu(x,y),$$
which is not continuous with respect to weak topology on $\sM$. 
\\

Another interesting problem is to compute the limiting distribution of $\sum_{i=1}^nf(i/n,\pi(i)/n)$ under the model $\Q_{n,f,\theta}$. Under uniform distribution on $S_n$ this statistic has a limiting normal distribution if $f\in \cC$, by Hoeffding's combinatorial Central Limit Theorem (\cite[Theorem 3]{Hoeffding}). Theorem \ref{l1} shows that $\frac{1}{n}\sum_{i=1}^nf(i/n,\pi(i)/n)$ converges to a constant, and gives a characterization of this constant in terms of permutation limits. It however fails to find non-degenerate limit distribution for this statistic. If one is interested in the testing problem of $\theta=\theta_0$ versus $\theta=\theta_1$ as in corollary  \ref{thm:ldmle}, then this distribution will be useful in determination of exact cut offs under null hypothesis, and evaluation of power under the alternative. Also using such distribution results, it should be possible to find out limit distributions of the estimators considered in this paper.
\\

Finally, this paper explores the asymptotics of parametric models on permutations. Viewing a permutation as a measure one can study non parametric models on permutons as well, and in fact one class of models was introduced and studied in \cite{HKMRS}.  
Such models can be used to fit permutations. This technique can also be used for comparing permutations  in a non parametric manner, such as in a classification problems on permutations. Section 3 gives a visual comparison, but comparisons can also be carried out in a more precise manner using a \enquote{suitable} metric for bivariate probability measures.


\subsection{Main contributions}

This paper gives a framework for analyzing probability distribution on large permutations. It computes asymptotics of normalizing constants in a class of exponential families on permutations, and explores identifiability of such models. It derives the limit in probability of statistics  under such models, and shows the consistency of MLE and an estimate based on the limiting log normalizing constant for such models.
It gives an Iterative Proportional Fitting Procedure (IPFP) to numerically compute the normalizing constant. It also shows $\sqrt{n}$ consistency of the pseudo-likelihood estimator of Besag.  It demonstrates the flexibility of this approach by analyzing the Mallows model with Kendall's Tau as its metric.   For the Mallows model with Kendall's Tau, it again shows consistency of the MLE, and an estimate based on the limiting log normalizing constant.

The main tool for proving the results is a large deviation principle for a uniformly random permutation. An arxiv version of this paper uses the recently developed notion of permutation limits from \cite{HKMRS} to give a new proof of this large deviation principle.
\subsection{Outline}

Section \ref{sec:eg} explores the Mallows model with Spearman's rank correlation as sufficient statistic, using the results of this paper. Section \ref{sec:draft}  analyzes the draft lottery data of 1971. Appendix \ref{appen:1} 
describes in brief the concept of permutation limits introduced in \cite{HKMRS}, and proves a large deviation principle for permutations in theorem \ref{ldp}.
  It also   carries out the proofs of the main results of this paper  using theorem \ref{ldp}. 

\section{An example: Spearman's rank correlation metric}\label{sec:eg}
 This section illustrates the conclusions of Theorem \ref{l1} and Theorem \ref{approximate} with a concrete example, the Spearman's rank correlation model. This is number (b) in the list of metrics in the introduction, the Spearman's rank correlation  given by
 $$||\pi-\sigma||_2^2=\sum_{i=1}^n(\pi(i)-\sigma(i))^2.$$  As pointed out in the introduction this does not satisfy triangle inequality and so is not a metric in the proper sense. However this version is used as it is right invariant and algebraically more tractable, and as such has received attention in Statistics literature (see \cite{Cr,D,FCo,Mallows} and references therein). The reason for its nomenclature is that if $\pi$ and $\sigma $ are two permutations of size $n$, then the simple correlation coefficient of the points $\{(\pi(i),\sigma(i)\}_{i=1}^n$ has the formula
 $$r(\pi,\sigma)=1-\frac{6||\pi-\sigma||_2^2}{n(n^2-1)},$$
 which is a one-one function of $||\pi-\sigma||_2^2$.
 \\

 Even for this simple metric the normalizing constant for the corresponding Mallows model  is not available in closed form. 
 As observed in the introduction, the Spearman's rank correlation model is obtained by setting $f(x,y)=-(x-y)^2$ or $f(x,y)=xy$ in the model of Theorem \ref{l1}.
  This section will work with the choice $f(x,y)=xy$. To be precise, the p.m.f. of this model is
  $$\Q_{n,f,\theta}=e^{(\theta/n^2)\sum_{i=1}^ni\pi(i)-Z_n(f,\theta)},$$
  where $Z_n(f,\theta)$ is the appropriate log normalizing constant as before.  
By the discussion after equation \eqref{cnc} it follows that theorem \ref{l1} is applicable for $f(x,y)=xy$. Thus  if $\pi$ is a random permutation from $\Q_{n,f,\theta}$ for this $f$, then the empirical measure $\nu_\pi=\frac{1}{n}\sum_{i=1}^n\delta_{(i/n,\pi(i)/n)}$ converges weakly to a measure in $\cM$ with density of the form $$g_{f,\theta}(x,y)=e^{\theta xy+a_{f,\theta}(x)+a_{f,\theta}(y)},$$ where the symmetry of $f$ has been used to deduce $b_{f,\theta}(.)=a_{f,\theta}(.)$. 
Using the uniform marginal condition gives 
$$1=\int_{y=0}^1e^{\theta xy+a_{f,\theta}(x)+a_{f,\theta}(y)}dy=e^{a_{f,\theta}(x)}\sum_{k=0}^\infty C_k(\theta)\frac{x ^k\theta^k}{k!}$$ with $C_k(\theta):=\int_{0}^1 y^kh_{f,\theta}(y)dy$, and so 
$$e^{a_{f,\theta}(x)}=\Big(\sum_{k=0}^\infty C_k(\theta) \frac{x^k\theta^k}{k!}\Big)^{-1}.$$ 
Another integration with respect to $x$ gives
$$\sum_{k=0}^\infty \frac{C_k^2(\theta)\theta^k}{k!}=1.$$
However, analytic solution of $g_{f,\theta}(.)$ seems intractable and is not attempted here.
Instead,  figure \ref{limit_vs_sample}(a) plots the  density $g_{f,\theta}(x,y)=e^{\theta xy+a_{f,\theta} (x)+a_{f,\theta} (y)}$  on a discrete grid of size $k\times k$ with $k=1000$. The values of the function are computed by the algorithm of theorem \ref{approximate}   starting with the $k\times k$ matrix  $B_0(i,j)=e^{(\theta/k^2) ij}$, where $\theta=20$. Part (d) of \ref{approximate} implies that $k^2B_m(i,j)$ can be taken as an approximation of the limiting density $g_{f,\theta}(i/k,j/k)$.
\begin{figure*}[h]
\centering
\begin{minipage}[c]{0.5\textwidth}
\centering
\includegraphics[width=3.3in]
    {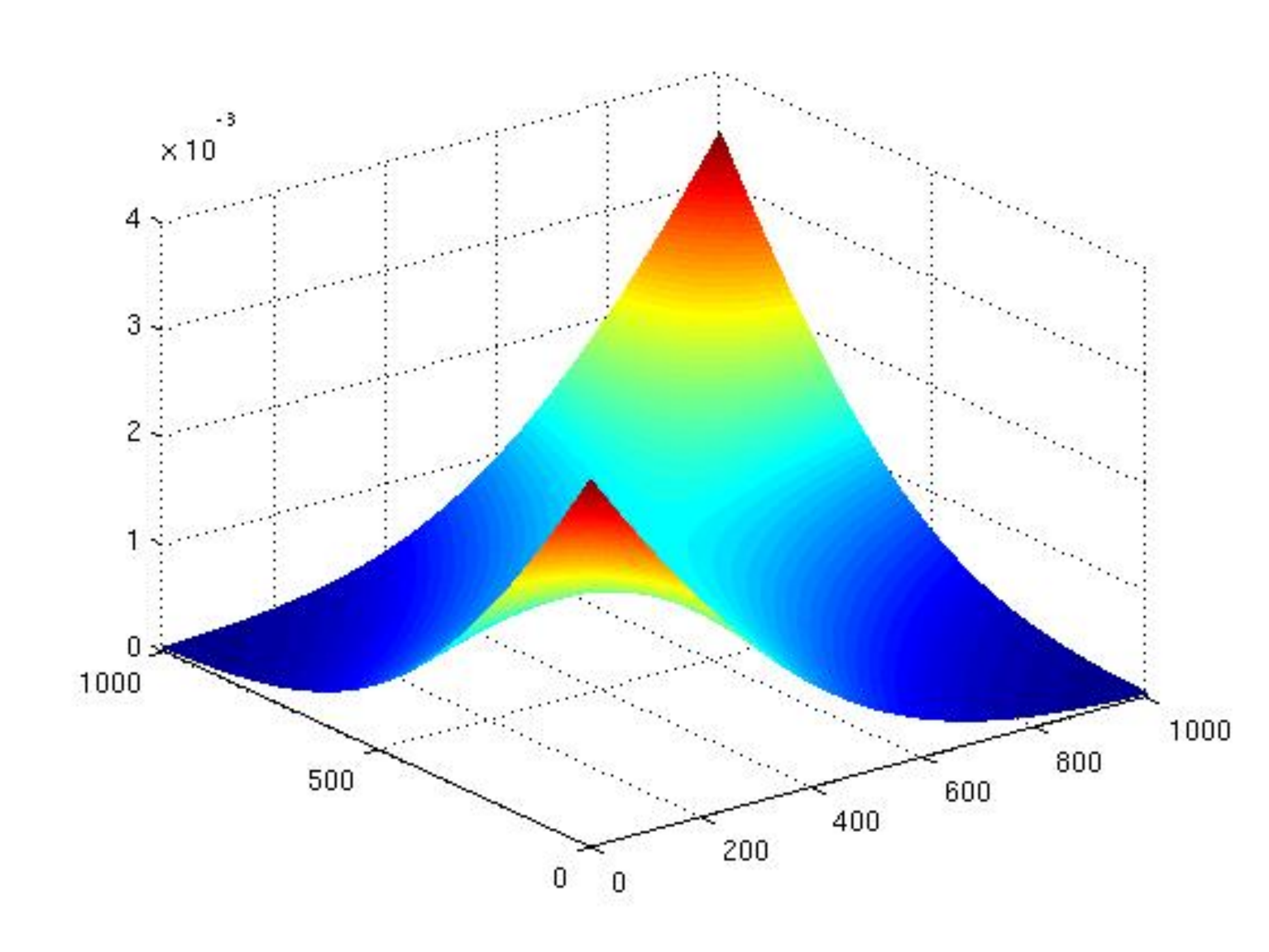}\\
\small{(a)}    
\end{minipage}
\begin{minipage}[c]{0.49\textwidth}
\centering
\includegraphics[width=3.5in]
    {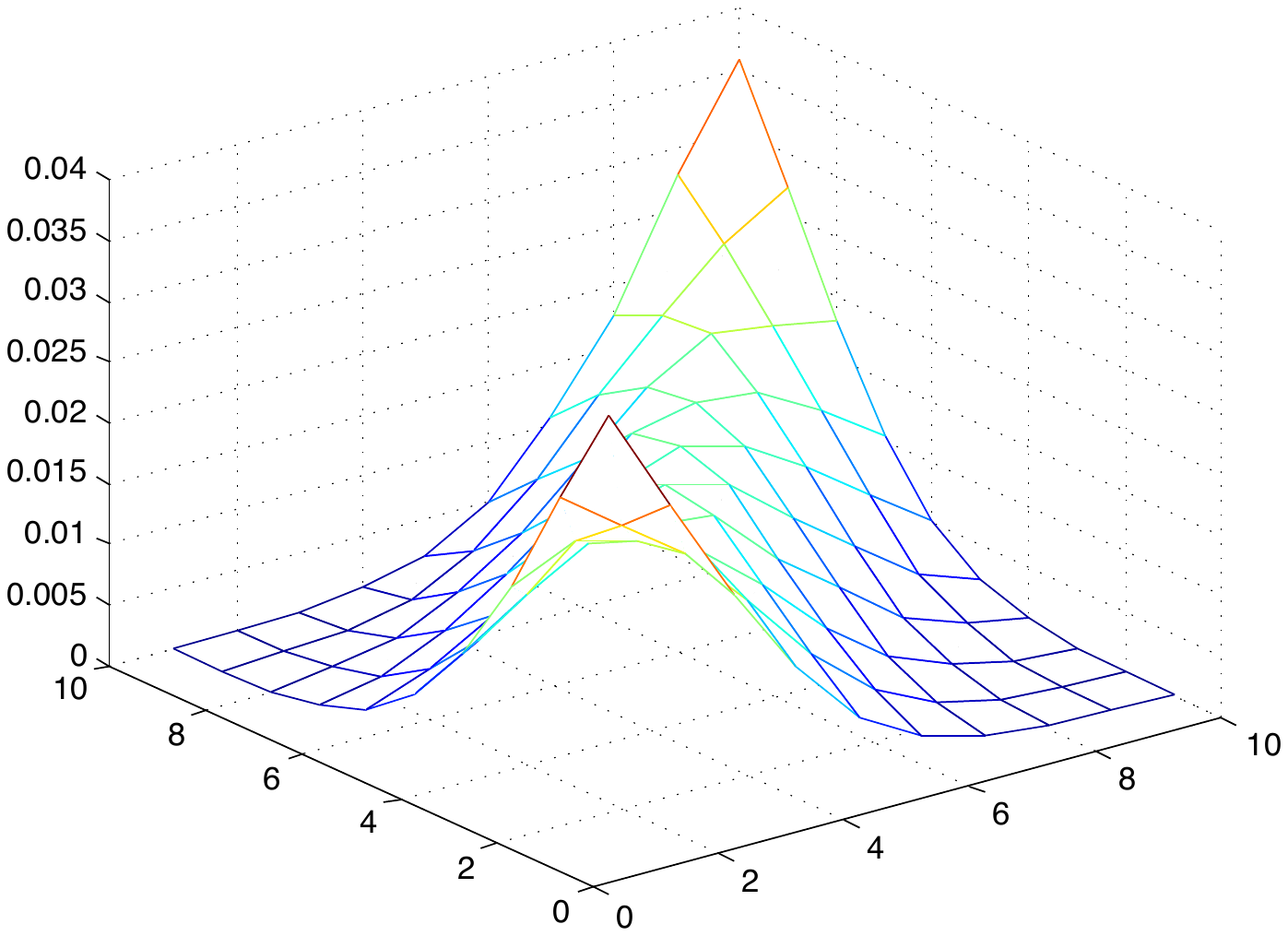}\\
\small{(b)}   
\end{minipage}
\caption{\small{(a) Density of limiting measure $\mu_{f,\theta}$ for $f(x,y)=xy,\theta=20$, (b)Histogram  of $\nu_\pi$ with $n=10000$ and $10\times 10$ bins.}}
\label{limit_vs_sample}
\end{figure*}

From figure \ref{limit_vs_sample} it is easy to see that $g_{f,\theta}$ has higher values on the diagonal $x=y$, which also follows from the fact that for $\theta>0$ the identity permutation has the largest probability under this model. The function $g_{f,\theta}(.,)$ is symmetric about the diagonal $x=y$, which follows from the fact that  $f(.,.)$ is symmetric. Another way to see this is by noting that if $\pi$ converges to a probability measure on $[0,1]^2$ with limiting density $g_{f,\theta}(x,y)$, then $\pi^{-1}$ converges to a measure on $[0,1]^2$ with limiting density $g_{f,\theta}(y,x)$. But since
$$\sum_{i=1}^ni\pi(i)=\sum_{i=1}^ni\pi^{-1}(i),$$
the law of $\pi$ and $\pi^{-1}$ are same under $\Q_{n,f,\theta}$, and so $\pi^{-1}$ has the limiting density $g_\theta(x,y)$ as well, thus giving $g_{f,\theta}(x,y)=g_{f,\theta}(y,x)$.

The function is also symmetric about the other diagonal $x+y=1$. A similar reasoning as above justifies this: 

Define $\sigma\in S_n$ by $\sigma(i):=n+1-\pi^{-1}(n+1-\pi(i))$ and note that if $\pi$ converges to a probability on $[0,1]^2$ with density $g_{f,\theta}(x,y)$, then $\sigma$ converges to a probability on $[0,1]^2$ with density $g_{f,\theta}(1-y,1-x)$. But since
$$\sum_{i=1}^ni\pi(i)=\sum_{i=1}^n(n+1-i)(n+1-\pi(i))=\sum_{i=1}^ni\sigma(i),$$ it follows that  under $\Q_{n,f,\theta}$ the distribution of $\pi$ is same as the distribution of $\sigma$.  Thus $\sigma$ has limiting  density $g_{f,\theta}(x,y)$ as well, which implies $g_{f,\theta}(x,y)=g_{f,\theta}(1-y,1-x)$, and so $g_{f,\theta}$ is symmetric about the line $x+y=1$.

To compare how close the empirical measure $\nu_\pi$ is to the limit, a random permutation $\pi$ of size $n=10000$ is drawn from $\Q_{n,f,\theta}$  via MCMC. The algorithm used to simulate from this model is adopted from \cite{AD}, and is explained below:

\begin{enumerate}
\item
Start with $\pi$  chosen uniformly at random from $S_n$.

\item
Given $\pi$, simulate $\{U_i\}_{i=1}^n$ mutually independent with $U_i$ uniform on $[0,e^{(\theta/n^2) i\pi(i)}]$.

\item
Given $U$, let $b_j:=\max\{(n^2/\theta j)\log U_j,1\} $. Then $1\leq b_j\le n$. Choose an index $i_1$ uniformly at random from set $\{j:b_j\le 1\}$, and set $\pi({i_1})=1$. Remove this index from $[n]$ and choose an index $i_2$ uniformly from $\{j:b_j\le 2\}-\{i_1\}$, and set $\sigma({i_2})=2$. In general, having defined $\{i_1,\cdots,i_{l-1}\}$, remove them from $[n]$, and choose $i_l$ uniformly from  $\{j:b_j\le l\}-\{i_1,i_2\cdots i_{l-1}\}$, and set $\pi({i_l})=l$. [That this step can be always carried out completely  was proved in \cite{DGH}.]

\item
Iterate between the steps 2 and 3 till convergence.

\end{enumerate}
 The above iteration is run $10$ times to obtain a single permutation $\pi$, and then the frequency histogram of the 
 points $\{i/n,\pi(i)/n\}_{i=1}^n$ are computed with $k\times k$ bins, where $k=10$. The mesh plot of the  frequency histogram is given in figure \ref{limit_vs_sample}(b).

The pattern of the histogram in Figure \ref{limit_vs_sample}(b) is very similar to the function plotted in Figure \ref{limit_vs_sample}(a), showing that  the probability assigned by the random permutation $\pi$  has a similar pattern as that of the limiting density $g_{f,\theta}(x,y)$. The histogram has been drawn with $k^2$ squares, each of size $.1$ as $k=10$.

Using  theorem \ref{approximate} gives an approximation to $\frac{1}{n}[Z_n(f,\theta)- Z_n(0)]$ as 
$$\frac{\theta}{k^2} \sum_{r,s=1}^kijB_m(r,s)-2\log k-\sum_{r,s=1}^kB_m(r,s)\log B_m(r,s).$$
Figure \ref{usethis} gives a plot of $\theta$ versus $\lim_{n\rightarrow\infty}\frac{1}{n}[ Z_n(f,\theta)- Z_n(0)]$, where the limiting value is estimated using the above  approximation. For this plot $k$ has been chosen to be $100$, and the range of $\theta$ has been taken to be $[-500,500]$. The number of iterations for the convergence of the iterative algorithm for each $\theta$ has been taken as $20$.
 \begin{figure}[htbp]
\centering
\includegraphics[height=2.7in,width=3.5in]{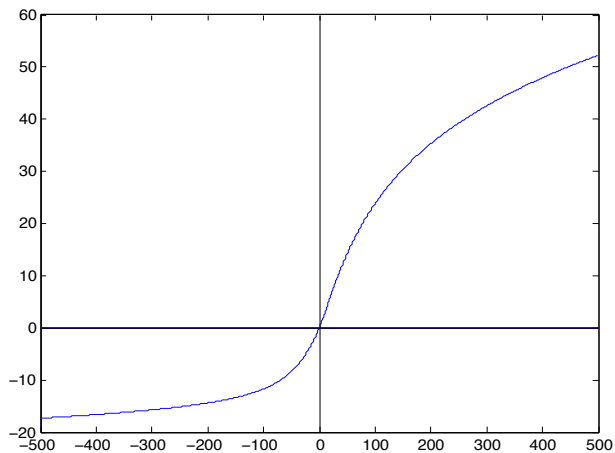}  
\caption{Plot of $\theta$ versus $Z(f,\theta)$ for rank correlation model.}
\centering
\label{usethis}
\end{figure}
The curve passes through $(0,0)$,  and goes to $\pm \infty$ as $\theta$ goes to $\pm \infty$, as expected.

The above method can be used to approximate the limiting log normalizing constant for any model of permutations  described in the setting Theorem \ref{l1}.

\section{Analysis of the 1970 draft lottery data}\label{sec:draft}

This section analyses the 1970 draft lottery data using the methods  developed in this paper. The data for this lottery is taken from \url{http://www.sss.gov/LOTTER8.HTM}. This lottery was used to determine the relative order in which male U.S. citizens born between 1944-1950 will join the army, based on their birthdays. As an example, September $14^{th}$ was the first chosen day, which means that people born on this date had to join first.
Assume that the $366$ days of the year are chronologically numbered,  i.e. January 1 is day 1, and December 31 is day 366. Then the data can be represented in the form of a permutation of size $366$, where $\pi(i)$ represents the $i^{th}$ day chosen in the lottery.  The lottery was carried out in a somewhat flawed manner as follows:
\\

$366$ capsules were made, one for each day of the year. The January capsules were put in a box first, and then mixed among themselves. The February capsules were then put in the box, and the capsules for the first two months were mixed. This was carried on until the December capsules were put in the box, and all the capsules were mixed. As a result of this mixing, the January capsules were mixed 12 times, the February capsules were mixed 11 times, and the December capsules were mixed just once. As a result, most of the capsules for the latter months stayed near the top, and ended up being drawn early in the lottery. The resulting  permutation $\pi$ thus seems to have a bias towards the permutation $$(366,365,\cdots,1),$$ and so the permutation $\tau=367-\pi$ should be biased towards the identity.

Thus the question of interest is to test whether the permutation $\tau$   is chosen uniformly at random from $S_{366}$, and the alternative hypothesis is that $\tau$ has a bias towards the identity permutation.
For $\tau\in S_n$ with $n=366$,  one can construct the histogram of the points $$\Big\{\Big(\frac{i}{n},\frac{\tau(i)}{n}\Big),1\le i\le n\Big\}.$$  If $\tau$ is indeed drawn from the uniform distribution on $S_n$, then then this histogram should be close to the uniform distribution on the unit square. The bivariate histogram is drawn with $10\times 10$ bins in figure \ref{hist_compare}(a). 
To compare this with the uniform distribution on $S_n$, a uniformly random permutation $\sigma$ is chosen from $S_n$, and the corresponding histogram is drawn in figure \ref{hist_compare}(b) with the same the number of bins as above.
From figure \ref{hist_compare} it seems that the heights of the bins in the second picture are a bit more uniform than the first.

\begin{figure*}[h]
\centering
\begin{minipage}[c]{0.5\textwidth}
\centering
\includegraphics[width=3.5in]
    {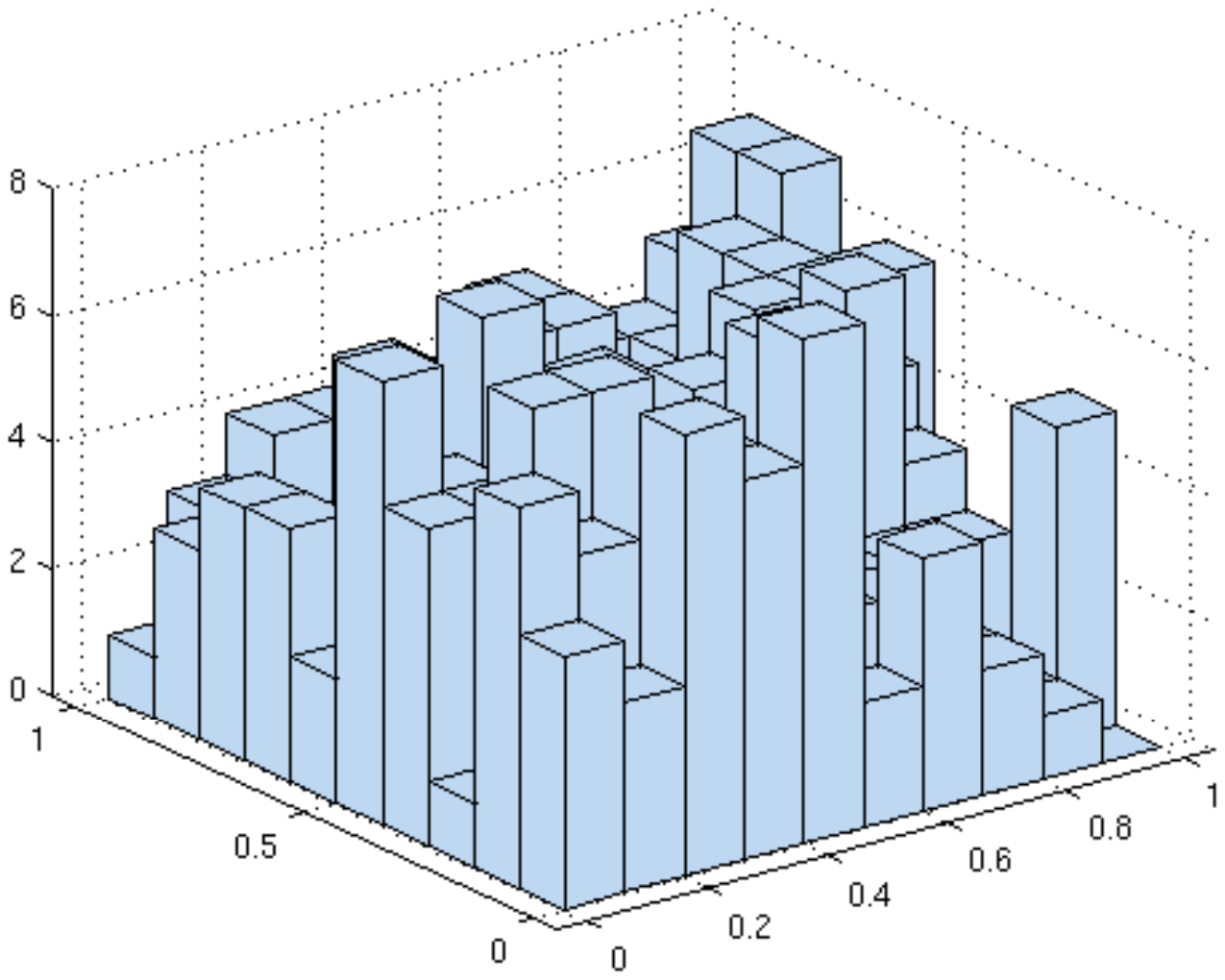}\\
\small{(a)}    
\end{minipage}
\begin{minipage}[c]{0.49\textwidth}
\centering
\includegraphics[width=3.5in]
    {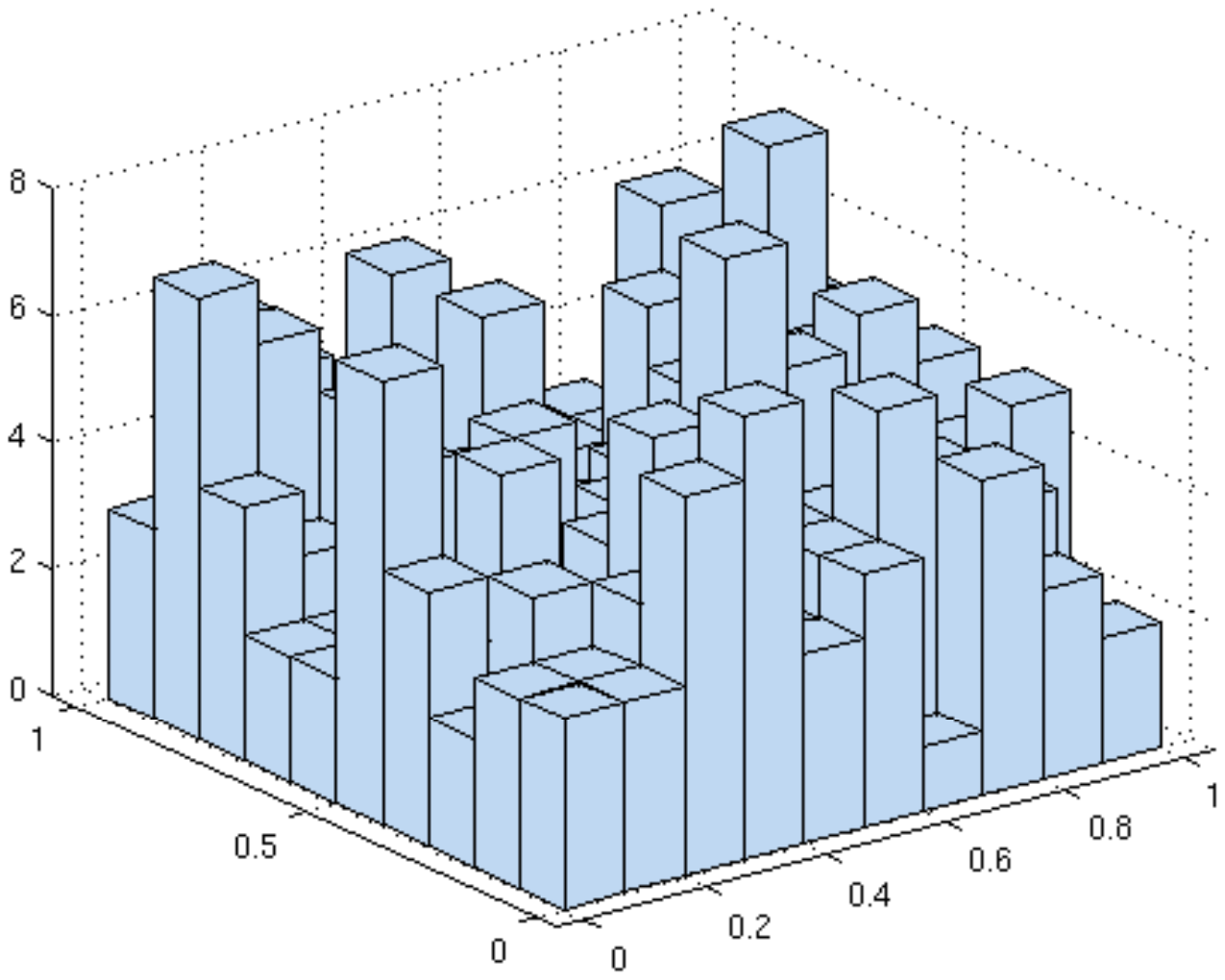}\\
\small{(b)}   
\end{minipage}
\caption{Bivariate histogram of the points $\frac{1}{366}\{(i,\tau(i)),1\le i\le 366\}$ with $10\times 10$ bins where $\tau$ is 
\small{(a) reverse permutation of draft lottery, (b) a random permutation chosen uniformly.}}
\label{hist_compare}
\end{figure*}

%
%
%

 If $\tau$ is indeed uniform, then the statistic
 $\frac{1}{n^3}\sum_{i=1}^n i\tau(i)$ has a limiting normal distribution with mean $\frac{1}{4}\Big(1+\frac{1}{n}\Big)^2\approx 0.25$ and variance $\frac{1}{144n}\Big(1-\frac{1}{n})\Big(1+\frac{1}{n}\Big)^2\approx \frac{1}{144n}\approx 1.89\times 10^{-5}$ (\cite[Page 116]{D}). The observed value of this statistic is $0.2702$, which clearly falls outside a $99\%$ acceptance region under the null hypothesis. Even  if the normal approximation is not believed, by Chebyshev's inequality one has 
 $$\P_{\theta=0}\Big(\frac{1}{366^3}\sum_{i=1}^{366}i\tau(i)\ge 0.2702\Big)\le \frac{1.89\times 10^{-5}}{.0502^2}\approx 0.0075,$$
 which suggests very strong evidence against the null hypothesis.
 \\
 
  If $\tau$ is assumed to be generated from  the model $$\Q_{n,f,\theta}(\tau)=e^{\theta/n^3\sum_{i=1}^ni\tau(i)-Z_n(f,\theta)}$$
 where $f(x,y)=xy$, the  test used above is the most powerful test (in the sense of NP Lemma) for testing  $\theta=0$ versus $\theta>0$. Since the null is rejected, 
 it might be of interest to see if there is another value of $\theta$ for which the model better fits the data.  To investigate this,  the value of $\theta$ is estimated using the estimators $\hat{\theta}_{LD}$ of Corollary \ref{thm:ldmle} and $\hat{\theta}_{PL}$ of  theorem \ref{est}. By a direct computation it turns out that $\hat{\theta}_{PL}=2.92$. To compute $\hat{\theta}_{LD}$ requires estimating the limiting log normalizing constant, for which one needs to carry out the IPFP algorithm of theorem \ref{approximate}. The grid size chosen for computing $\hat{\theta}_{LD}$ is $1000\times 1000$. It follows from the proof of theorem \ref{approximate} that the error in approximating the limiting log partition function $Z(f,\theta)$ by a $k$ step approximation $W_k(f,\theta)$ is bounded by $|\theta|\epsilon_k$, where 
 $$\epsilon_k=\sup_{|x_1-x_2|\le 1/k,|y_1-y_2|\le 1/k}|f(x_1,y_1)-f(x_2,y_2)|\le \frac{2}{k}.$$ Thus a choice of $k=1000$ should ensure that the limiting log partition function is correct upto the first two decimal places, assuming the run time $m$ is large.
Larger values of $k$ will increase accuracy of the estimate, at the cost of speed of computation. For each value of $\theta$ the IPFP algorithm is run $m=200$ times. The estimate $\hat{\theta}_{LD}$ turns out to be $2.96$, which is close to the pseudo-likelihood estimate.
To compare the relative performance of the two estimators $\hat{\theta}_{PL}$ and $\hat{\theta}_{LD}$,  a sample of $1000$ values is drawn from this model for $\theta=2.92$ and $\theta=2.96$, and the histogram of the statistic $n^{-3}\sum_{i=1}^ni\tau(i)$ is plotted side by side in figure \ref{hist_for_spearman} with $25$ bins. The observed value from the draft lottery data is $.2702$, represented by the green line. From figure \ref{hist_for_spearman} it is clear that both estimates give a good fit to the observed data. 
\begin{figure*}[h]
\centering
\includegraphics[width=3in]{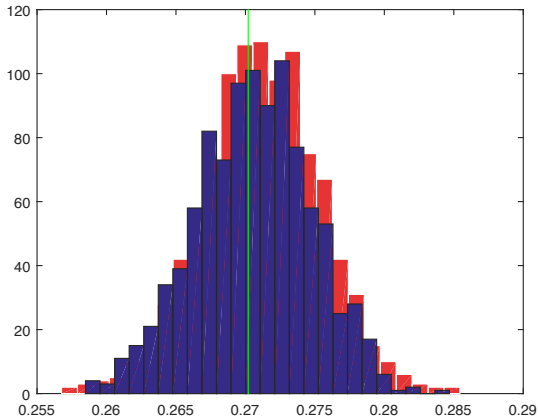}\\
\caption{\small{ Histogram of the statistic $366^{-3}\sum_{i=1}^{366}i\tau(i)$ with $1000$ independent draws grouped into $25$ bins, where $\tau$ is a random permutation from Spearman's rank correlation model with
\small{(a) $\theta=2.92$ in blue (Pseudo-likelihood), (b) $\theta=2.96$ in red (LD-MLE). The green line at $0.2702$ is obtained  when $\tau$ is the reverse permutation of Draft Lottery data.}}}
\label{hist_for_spearman}
\end{figure*}


Finally, to test whether these values of $\theta$ gives a good fit to the given data, an independent random permutation $\hat{\tau}$ is drawn from this model with $\theta=2.92$. The same auxiliary variable algorithm of Andersen-Diaconis from the previous section is used to draw the sample.
The histogram of $\hat{\tau}$ is given below in figure \ref{hist_compare_fit}(b) with $10\times 10$ bins, along side the histogram for the observed permutation $\tau$ in \ref{hist_compare_fit}(a).

\begin{figure*}[h]
\centering
\begin{minipage}[c]{0.5\textwidth}
\centering
\includegraphics[width=3.5in]
    {draft}\\
\small{(a)}    
\end{minipage}
\begin{minipage}[c]{0.49\textwidth}
\centering
\includegraphics[width=3.5in]
    {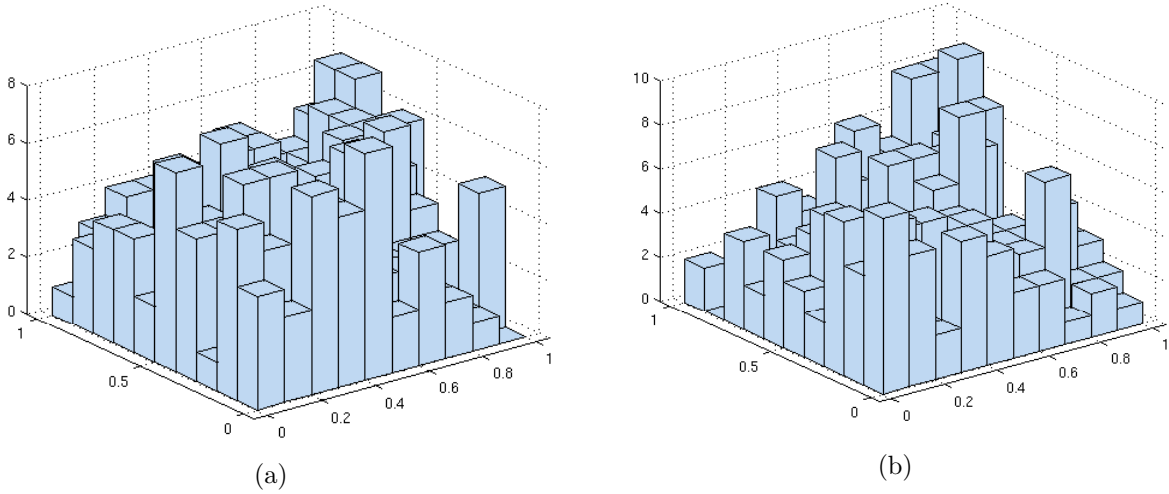}\\
\small{(b)}   
\end{minipage}
\caption{Bivariate histogram of the points $\frac{1}{366}\{(i,\tau(i)),1\le i\le 366\}$ with $10\times 10$ bins where $\tau$ is 
\small{(a) reverse permutation of draft lottery, (b) a random permutation chosen from Spearman's model with $\theta=2.92$.}}
\label{hist_compare_fit}
\end{figure*}

The bivariate histogram of  the points $(i/n,\tau(i)/n)_{i=1}^{n}$ for the observed permutation $\tau$ and the points $(i/n,\hat{\tau}(i)/n)_{i=1}^{n}$ for the simulated permutation $\hat{\tau}$ is drawn in figure \ref{hist_compare_fit}. This seems to be a better match than the histograms for $\tau$ and $\sigma$ in figure \ref{hist_compare_fit}, where $\sigma$ was a permutation drawn uniformly at random. This agrees with the observation made in \cite{F} that the observed permutation does not seem uniformly random.


\section{Appendix: Proofs of main results}\label{appen:1}

\subsection{Permutation limits}

%
%

The concept of permutation limits was introduced in \cite{HKMRS} in 2011, and was  motivated from graph limit theory. For a brief exposition of the theory of graph limits refer to Lovasz \cite{LB}.  The central idea in permutation limit theory is that any permutation can be thought of as a  probability measures $\sM$ on $[0, 1]^2$ with uniform marginals. For any $\pi\in S_n$, define a probability measure $\mu_{\pi}\in \sM$
as $d\mu_{\pi}:=f_{\pi}(x,y) dxdy$, where $f_{\pi}(x,y)=n\pmb 1\{(x,y):\pi(\lfloor n x\rfloor)=\lfloor ny\rfloor\}$ is the density of $\mu_{\pi}$ with respect to Lebesgue measure. An intuitive definition of $\mu_\pi$ is as follows:
\\

 Partition $[0, 1]^2$ into $n^2$ squares of side length $1/n$, and define $f_{\pi}(x, y)=n$ for all $(x, y)$ in the $(i, j)$-th square if $\pi(i)=j$ and 0 otherwise. As an example, the measure $\mu_\pi$ corresponding to the permutation $\pi=(1,3,2)$ has the density of figure \ref{perm_eg}.
 \begin{figure*}[htbp]
\centering
\includegraphics[height=1.5in,width=1.5in]{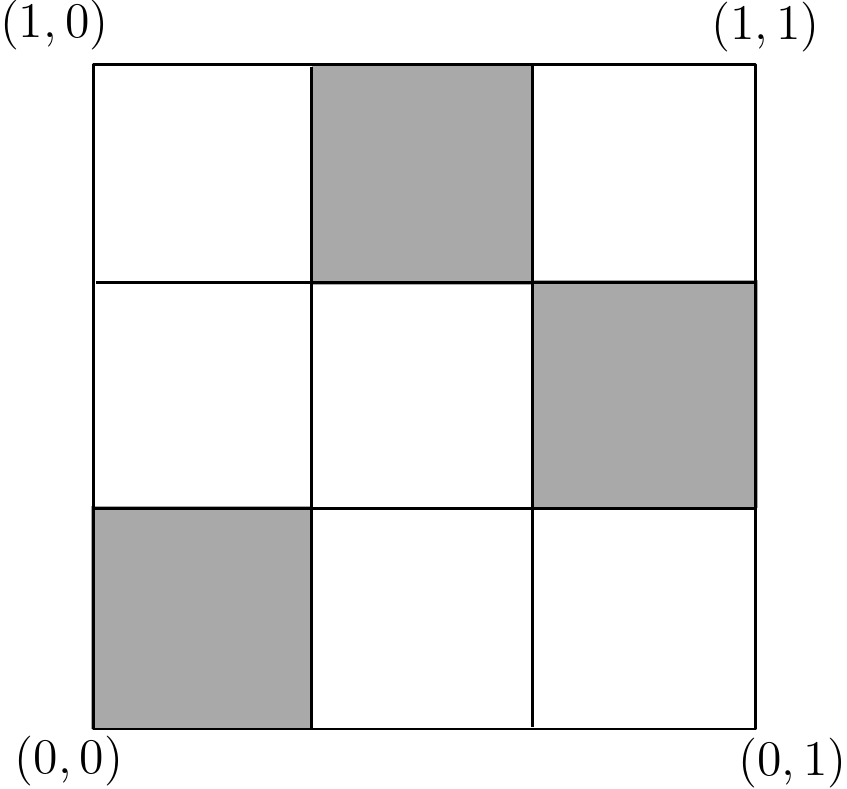} 
\caption{Measure representation for the permutation $(1,3,2)$. Here the shaded region has density $3$, and the white region has density $0$.} 
\centering
\label{perm_eg}
\end{figure*}
Here the shaded region has density $3$, and the white region has $0$ density.

 A sequence of permutations $\pi_n\in S_n$ is said to converge to a measure $\mu\in \cM$, if the corresponding sequence of probability measures $\mu_{\pi_n}$ converge weakly to $\mu$. As an example if $\pi_n$ is uniformly distributed on $S_n$, then $\pi_n$ converges to Lebesgue measure on $[0,1]^2$. If $\pi_n=(1,2,\cdots,n)$ is the identity permutation on $S_n$, then $\pi_n$ converges to a measure which is uniform on the diagonal $x=y$. Similarly if $\pi_n=(n,n-1,\cdots,1)$ is the reverse permutation, then $\pi_n$ converges to the uniform measures on the diagonal $x+y=1$. For non trivial limits that can arise as permutation limits, refer to Theorem \ref{l1} and Proposition \ref{mallow}.

\subsection{The large deviation principle}

Given a permutation $\pi$, the previous subsection defined a measure $\mu_\pi$ on the unit square. Also recall part (b) of theorem \ref{l1} which, given a permutation $\pi\in S_n$, defines a measure $$\nu_\pi=\frac{1}{n}\sum_{i=1}^n\delta_{(i/n,\pi(i)/n)}$$ on the unit square.
Both marginals of $\nu_\pi$  are discrete uniform on  the set $\{(i/n),i\in 1,2,\cdots,n\}$. Since the marginals are not uniform on $[0,1]$, $\nu_\pi$ is not an element of $\sM$, but any weak limit of the sequence $\nu_{\pi_n}$ is in $\sM$ if the size of the permutation  goes to $\infty$.
If the size of the permutation $\pi$ is large, the two measure $\mu_\pi$ and $\nu_\pi$ are close in the weak topology. To see this, let $\pi\in S_n$, and let $F_{\mu_\pi}$ and $F_{\nu_\pi}$ represent the bivariate distribution functions of $\mu_\pi$ and $\nu_\pi$ respectively.   Then it follows that
$$d_\infty(\mu_\pi,\nu_\pi):=\sup_{0\le x,y\le 1}|F_{\mu_\pi}(x,y)-F_{\nu_\pi}(x,y)|\le \frac{2}{n}.$$
To see this note that both $\mu_\pi$ and $\nu_\pi$ can be defined by partitioning the unit square into $n^2$ boxes, such that exactly $n$ boxes receive a mass of $1/n$. Also the choice of the $n$ boxes is such that every row and every column will have exactly one box of positive mass. Thus  any vertical line through $x$ can intersection exactly one box in this partition which has positive probability, and so the above difference can be at most $1/n+1/n$.

 The main tool for proving the results of this paper is a large deviation principle for $\mu_{\pi}$ with respect to weak convergence on $\sM$ where $\pi \sim\P_n$, the uniform probability measure on $S_n$. This result is stated below.

\begin{thm}\label{ldp}
If  $\pi\sim \mathbb{P}_n$, the uniform measure on $S_n$,  the sequence of probability measures $\mu_\pi$  satisfies a large deviation principle on $\cM$ with the good rate function $D(\mu||u)$, 
where $u$ is the uniform measure on  $[0,1]^2$. More precisely, for   any set $A\subset \cM$ one has
\[-\inf_{\mu\in A^o}D(\mu||u)\leq \liminf_{n\rightarrow\infty}\frac{1}{n}\log\mathbb{P}_n(A)\leq \limsup_{n\rightarrow\infty}\frac{1}{n}\log \mathbb{P}_n(A)\leq -\inf_{\mu\in \overline{A}}D(\mu||u),\] where  $A^o$ and $\overline{A}$  denotes the  interior and closure of $A$ respectively.
\end{thm}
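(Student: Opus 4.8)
\emph{Strategy and the block-level estimate.} The plan is to discretize $[0,1]^2$ into a $k\times k$ grid, establish a finite-dimensional large deviation principle (LDP) for the resulting block-count matrix, and then let $k\to\infty$ through the Dawson--Gärtner projective-limit theorem, using that the weak topology on $\sM$ is the projective limit of the block topologies. For the first step, fix $k\ge1$ and partition $[0,1]^2$ into the cells $R_{ab}=(\tfrac{a-1}{k},\tfrac ak]\times(\tfrac{b-1}{k},\tfrac bk]$, $1\le a,b\le k$; to $\pi\in S_n$ attach the matrix $A^{(k)}_n(\pi)\in\cM_k$ whose $(a,b)$ entry is $\mu_\pi(R_{ab})=\tfrac1n\#\{i:(i/n,\pi(i)/n)\in R_{ab}\}$ (the uniform-marginal property of $\mu_\pi$ forces the row and column sums to equal $1/k$, exactly when $k\mid n$ and up to $O(1/n)$ otherwise). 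A direct count --- for each row-block choose which of its indices map into each column-block, do likewise for the column-blocks, then match within each cell --- shows that the number of $\pi\in S_n$ with a prescribed margin-compatible block matrix $N=nA$ equals $(m!)^{2k}\big/\prod_{a,b}(nA_{ab})!$ with $m=n/k$. Since only polynomially many $A$ (at most $(n+1)^{k^2}$) are feasible, Stirling's formula gives, uniformly in $A$,
\[\frac1n\log\mathbb P_n\!\big(A^{(k)}_n(\pi)=A\big)=-\Big(2\log k+\sum_{a,b}A_{ab}\log A_{ab}\Big)+o(1)=-D(\mu_A\|u)+o(1),\]
where $\mu_A$ is the piecewise-constant measure with density $k^2A_{ab}$ on $R_{ab}$, so that the method of types upgrades this to an LDP at speed $n$ for the $\cM_k$-valued sequence $A^{(k)}_n(\pi)$ with good rate function $I_k(A)=D(\mu_A\|u)$.

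\emph{Passing to the limit.} Take $k=2^j$ so that the grids are nested and the block $\sigma$-fields $\mathcal F_{2^j}$ increase to the Borel $\sigma$-field; let $p_j\colon\sM\to\cM_{2^j}$ be the block-mass map, so $p_j(\mu_\pi)=A^{(2^j)}_n(\pi)$. On $\sM$ the weak topology coincides with the projective limit of the $\cM_{2^j}$ along these maps --- because all marginals are uniform, convergence of the distribution function at dyadic points forces convergence everywhere, the limiting distribution function being continuous --- which is part of the permuton formalism of \cite{HKMRS}. Feeding the LDPs of the previous paragraph into the Dawson--Gärtner theorem therefore produces an LDP for $\mu_\pi$ on $\sM$ with good rate function $I(\mu)=\sup_j I_{2^j}(p_j\mu)$, i.e.\ the supremum over dyadic grids of the entropy of $\mu$ discretized to that grid relative to $u$ discretized to that grid. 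Conditional Jensen's inequality makes this supremum nondecreasing in $j$, and the classical variational characterization of relative entropy (as the supremum of its finite-partition discretizations, equivalently $L^1$ martingale convergence of $d\mu/du$) identifies it with $D(\mu\|u)$. Finally $D(\cdot\|u)$ is weakly lower semicontinuous and $\sM$ is weakly compact, so its sublevel sets are compact, the rate function is good, and every $\mu_\pi$ lies in $\sM$ --- which is exactly the asserted statement.

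\emph{Main obstacle.} The enumeration and the Stirling estimate are routine, and Dawson--Gärtner is a black box; the points needing genuine care are (i) that the finite-dimensional block statistics really capture the weak topology on the infinite-dimensional space $\sM$, where the uniform-marginal structure (continuity of the limiting distribution function) is what saves the day, and (ii) that the projective rate function collapses all the way down to relative entropy rather than to something strictly larger, i.e.\ the monotone passage $\sup_j I_{2^j}(p_j\mu)=D(\mu\|u)$. A self-contained route that avoids Dawson--Gärtner replaces the second paragraph by the observation that $\mu_\pi$ is, deterministically, within $d_\infty$-distance $2/k$ of the piecewise-constant measure $\mu_{A^{(k)}_n(\pi)}$ --- the very estimate used in the text to compare $\mu_\pi$ with $\nu_\pi$ --- so the latter measures are uniformly, hence exponentially, good approximations of $\mu_\pi$; pushing the block-level LDPs forward by the continuous injections $\cM_k\hookrightarrow\sM$ and applying the approximation theorem for large deviations then gives the same conclusion, provided one checks via lower semicontinuity of $D$ and martingale convergence that the resulting rate function is again $D(\cdot\|u)$.
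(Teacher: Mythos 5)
Your combinatorial core is exactly the paper's: the count of permutations compatible with a prescribed block matrix, $\prod_r M_r!\prod_s M_s!/\prod_{r,s}M_{rs}!$ (your $(m!)^{2k}/\prod_{a,b}(nA_{ab})!$ when $k\mid n$), is Lemma \ref{aux} (the Fisher--Yates distribution), and the Stirling estimate yielding $-2\log k-\sum A_{ab}\log A_{ab}+o(1)$ uniformly over the polynomially many types is Lemma \ref{dist}. Where you diverge is in the lifting machinery: the paper does not use Dawson--G\"artner or the exponential-approximation theorem, but rather \cite[Theorem 4.1.11]{DZ} (LDP from limits along a base of the topology), after showing that the block-mass neighborhoods $\ub(\epsilon)$ form a base for the weak topology on $\sM$ and that $\mu_\pi$ and its block discretization differ by at most $2/n$ in $d_\infty$. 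The resulting abstract rate function $\sup_{\ub(\epsilon)\ni\mu}\inf_{A\in\lbl(\epsilon)}D(p_A\|p_{U_k})$ is then identified with $D(\mu\|u)$ by exactly the mechanism you invoke: the Donsker--Varadhan variational formula restricted to block-constant functions for the upper bound, and lower semicontinuity of $D(\cdot\|u)$ for the lower bound. Your projective-limit route buys a cleaner conceptual statement (nested dyadic grids, martingale convergence of the density) at the cost of verifying that $\sM$ really is the projective limit of the $\cM_{2^j}$; the paper's base-of-topology route avoids that identification but requires the explicit two-sided squeeze in $\epsilon\pm\delta$ of Lemma \ref{limit}. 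Both are standard and both work.

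One step you dismiss as ``routine'' deserves a sentence: the lower bound of the finite-$k$ LDP requires that every $A\in\cM_k$ be approximable by \emph{achievable} integer types $M^{(n)}/n$ with the correct row and column sums. The paper proves this (claim \eqref{claim} in Lemma \ref{dist}) by invoking \cite[Lemmas 4.2 and 5.3]{HKMRS} to produce permutations $\sigma_n$ with $\nu_{\sigma_n}\Rightarrow\mu$ for a $\mu$ realizing $A$; alternatively a direct rounding argument on the transportation polytope works. Without some such argument the method-of-types lower bound is not complete.
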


The following proposition derives  the large deviation of   $\nu_\pi$ from that of $\mu_\pi$.

\begin{ppn}\label{aaa}
If $\pi\sim \mathbb{P}_n$, the uniform probability measure on $S_n$, the sequence of probability measures $\nu_\pi$ satisfy a large deviation principle on the  space of probability measures on $[0,1]^2$ with respect to the weak topology, with the good rate function $\overline{I}(.)$  given by 
\begin{align*}
\overline{I}(\mu):=&D(\mu||u),\text{ if }\mu\in\sM,\\
:=&\infty \text{ otherwise.}
\end{align*}
%

\end{ppn}

\begin{proof}

Since the set of all probability measures $[0,1]^2$ is compact, the set $\sM$ is compact as well. An application of  \cite[Lemma 4.1.5 (a)]{DZ} and the large deviation result for $\mu_\pi$ (Theorem \ref{ldp}) gives that under $\mathbb{P}_n$, the sequence $\mu_\pi$ satisfies a large deviation principle on the space of probability measures on $[0,1]^2$ with the rate function $\overline{I}$.  Since the two sequences $\mu_\pi$ and $\nu_\pi$ are close in the $d_\infty(.,.)$, by \cite[Theorem 4.2.13]{DZ} they have the same large deviation. 

\end{proof}

Theorem \ref{l1} now follows from Proposition \ref{aaa} as follows.

\begin{proof}[Proof of Theorem \ref{l1}]
\begin{enumerate}[(a)]
\item

Note that 
\[e^{Z_n(f,\theta)-Z_n(0)}=\frac{1}{n!}\sum_{\pi\in S_n}e^{\theta \sum_{i=1}^nf(i/n,\pi(i)/n)}=\mathbb{E}_{\mathbb{P}_n}e^{n\theta \nu_\pi[f]},\]
where $Z_n(0)=\log n!$, and $\mu[f]=\int_{[0,1]^2} fd\mu$  denotes the mean of $f$ with respect to $\mu$. Since the function $\mu\mapsto \theta \mu[f]$ is bounded and continuous, an application of Varadhan's Lemma \cite[Theorem 4.3.1]{DZ} along with the large deviation of $\nu_\pi$  gives the desired conclusion.
\\

\item

The function $\mu\mapsto \theta \mu[f]-D(\mu||u)$ is strictly concave (on the set where it is finite) and upper semi continuous on the compact set $\sM$, and so the global maximum is attained at a unique $\mu_{f,\theta}\in \sM$. To show the weak convergence of $\nu_\pi$ fix  an open set $U$ containing $\mu_{f,\theta}$, define a function $T:\sM\mapsto [-\infty,\infty)$ by $$T(\mu)=\theta\mu[f]\text{ if }\mu\in U^c, \quad -\infty\text{ otherwise }.$$
Then
$$\frac{1}{n}\log \Q_{n,f,\theta}(\nu_\pi\in U^c)=\frac{1}{n}\log \E_{\P_n}e^{nT(\nu_\pi)}-\frac{1}{n} Z_n(f,\theta).$$
Since $T$ is upper semi continuous and bounded above, \cite[Equation 4.3.2]{DZ}  holds trivially and so by \cite[Lemma 4.3.6]{DZ} along with the large deviation result for $\nu_\pi$ one has 
$$\limsup_{n\rightarrow\infty}\frac{1}{n}\log\E_{\P_n}e^{nT(\nu_\pi)}\le \sup_{\mu\in U^c\cap \sM}\{\theta\mu[f]-D(\mu||u)\}.$$
This, along with part (a) gives
$$\limsup_{n\rightarrow\infty}\frac{1}{n}\log \Q_{n,f,\theta}(\nu_\pi\in U^c)\le  \sup_{\mu\in U^c\cap \sM}\{\theta \mu[f]-D(\mu||u)\}-\sup_{\mu\in\sM}\{\theta \mu[f]-D(\mu||u)\}.$$
  The quantity on the right hand side above is negative as the infimum over the compact set $U^c\cap \sM$ is attained, and the global minimizer $\mu_{f,\theta}$ is not in $U^c$ by choice.
   This proves that $\Q_{n,f,\theta})(\nu_\pi\in U^c)$ decays to $0$  at an exponential rate, which in particular implies that $\{\nu_\pi\}$ converges to $\mu_{f,\theta}$ weakly in probability.

\item
Since $\theta f(.)$ is integrable with respect to $du$, by \cite[Corollary 3.2]{Cs}  there exists  functions $a_{f,\theta}(.),b_{f,\theta}(.):\in L^1[0,1]$ such that \[d\mu_{a,b}=g_{a,b}dxdy:=e^{\theta f(x,y)+a_{f,\theta}(x)+b_{f,\theta}(y)}dxdy\in \sM.\] 
The proof that $\mu_{a,b}=\mu_{f,\theta}$ is by way of contradiction. Suppose this is not true. Since $\mu_{f,\theta}$ is the unique global minimizer  of $I_{f,\theta}(\mu):=D(\mu||u)-\theta\mu[f]$, setting
 \[h(\alpha):= I_{f,\theta}((1-\alpha)\mu_{a,b}+\alpha\mu_{f,\theta})\] it must be that  $h(\alpha)$  has a global minima at $\alpha=1$. Also 
 \[I_{f,\theta}(\mu_{f,\theta})\leq I_{f,\theta}(u)=-\theta u(f)<\infty,\]
   which forces $D(\mu_{f,\theta}||u)<\infty$. Thus letting $\phi_{f,\theta}:=\frac{d\mu_{f,\theta}}{du}$ gives
 \begin{align*}
 h'(0)=&\int_T(\phi_{f,\theta}(x,y)-g_{a,b}(x,y))(\log g_{a,b}(x,y)-\theta f(x,y))du\\
 =&\int_T(\phi_{f,\theta}(x,y)-g_{a,b}(x,y))( a_{f,\theta}(x)+ b_{f,\theta}(y))du\\
 =&\mathbb{E}_{\mu_{f,\theta}}[ a_{f,\theta}(X)+ b_{f,\theta}(Y)]-\mathbb{E}_{\mu_{a,b}}[ a_{f,\theta}(X)+ b_{f,\theta}(Y)]=0,
 \end{align*}
where the last equality follows from the fact that both $\mu_{f,\theta}$ and $\mu_{a,b}$ have the same uniform marginals.
But $h$ is convex, which forces that $\alpha=0$ is also a global minima of $h(.)$. Thus $h(0)=h(1)$, a contradiction to the uniqueness of $\arg\max_{\mu\in\sM}\{\theta \mu[f]-D(\mu||u)\}$ proved in part (b). Thus it must be that
  $$d\mu_{f,\theta}=d\mu_{a,b}=e^{\theta f(x,y)+a_{f,\theta}(x)+b_{f,\theta}(y)}dxdy.$$ Finally, the almost sure uniqueness of $a_{f,\theta}(.)$ and $b_{f,\theta}(.)$ follows from the uniqueness of the optimizing measure $\mu_{f,\theta}$.
The last claim of part (c) then follows from part (a) by a simple calculation.

\item
Since $\nu_\pi$ converges in probability to $\mu_{f,\theta}$, it follows by Dominated Convergence theorem that
$$Z_n'(f,\theta)=\E_{\Q_{n,f,\theta}} \frac{1}{n}\sum_{i=1}^nf(i/n,\pi(i)/n)\stackrel{n\rightarrow\infty}{\rightarrow}\mu_{f,\theta}[f].$$
Another application of Dominated Convergence theorem gives that
$$\frac{1}{n}[Z_n(f,\theta)-Z_n(0)]=\int_0^\theta \frac{1}{n}Z_n'(f,t)dt\stackrel{n\rightarrow\infty}{\rightarrow}\int_0^\theta \mu_{f,t}[f],$$
which along with part (a) gives that $Z'(f,\theta)=\mu_{f,\theta}[f]$. 
\\

Since $Z(f,\theta)$ is convex $Z'(f,\theta)$ is non-decreasing. To show that $Z'(f,\theta)$ is strictly increasing, by way of contradiction let $\theta_1\ne \theta_2$ be such that $Z'(f,\theta_1)=Z'(f,\theta_2)$ for some $\theta_1\ne \theta_2$, which implies $\mu_{f,\theta_1}[f]=\mu_{f,\theta_2}[f]$. The optimality  of $\mu_{f,\theta_1}$ gives $$\theta_1\mu_{f,\theta_1}[f]-D(\mu_{f,\theta_1}||u)\ge  \theta_1\mu_{f,\theta_2}[f]-D(\mu_{f,\theta_2}||u),$$
which implies $D(\mu_{f,\theta_1}||u)\le D(\mu_{f,\theta_2}||u)$. By symmetry  $D(\mu_{f,\theta_1}||u)=D(\mu_{f,\theta_2}||u)$, and so $\theta_1 \mu_{f,\theta_1}[f]-D(\mu_{f,\theta_1}||u)=\theta_1\mu_{f,\theta_2}[f]-D(\mu_{f,\theta_2}||u)$. This implies $\mu_{f,\theta_1}=\mu_{f,\theta_2}$ by the uniqueness of theorem \ref{l1} part (b). By the form of the optimizer proved in theorem \ref{l1} part (c) one has
$$e^{\theta_1 f(x,y)+a_{f,\theta_1}(x)+b_{f,\theta_1}(y)}=e^{\theta_2 f(x,y)+a_{f,\theta_2}(x)+b_{f,\theta_2}(y)},$$ which on taking log gives
$f(x,y)=\frac{1}{\theta_1-\theta_2}\Big(a_{f,\theta_2}(x)+b_{f,\theta_2}(y)-a_{f,\theta_1}(x)-b_{f,\theta_1}(y)\Big).$
Integrating with respect to $y$ using the definition of $\cC$ gives 
$a_{f,\theta_1}(x)-a_{f,\theta_2}(x)=\int_0^1 [b_{f,\theta_2}(y)-b_{f,\theta_1}(y)]dy,$ and so $a_{f,\theta_1}(x)-a_{f,\theta_2}(x)$ is a constant. By symmetry $b_{f,\theta_1}(y)-b_{f,\theta_2}(y)$ is a constant as well, and so $f(x,y)$ is constant, a contradiction to the assumption that $f\in \cC$.  
\\

Finally to show continuity of $Z'(f,\theta)$, let $\theta_k$ be a sequence of reals converging to $\theta$. Since sequence of measures $\mu_{f,\theta_k}\in \cM$ is tight, let $\mu$ be any limit point of this sequence.  Then by continuity of $Z(f,.)$ and lower semi continuity of $D(.||u)$ one has $$Z(f,\theta)=\limsup_{k\rightarrow\infty}Z(f,\theta_k)=\limsup_{k\rightarrow\infty}\{\theta_k\mu_{f,\theta_k}[f]-D(\mu_{f,\theta_k}||u)\}\le \theta \mu[f]-D(\mu||u).$$
Since $Z(f,\theta)=\sup_{\mu\in \cM}\{\theta \mu[f]-D(\mu||\theta)\}$ and the supremum is attained uniquely at $\mu_{f,\theta}$ it follows that $\mu=\mu_{f,\theta}$, and so the sequence $\mu_{f,\theta_k}$ converge weakly to $\mu_{f,\theta}$. But this readily implies
$$Z'(f,\theta_k)=\mu_{f,\theta_k}[f]\stackrel{k\rightarrow\infty}{\rightarrow}\mu_{f,\theta}[f]=Z'(f,\theta),$$
and so $Z'(f,.)$ is continuous, thus completing the proof of the theorem.
\end{enumerate}
\end{proof}

\begin{proof}[Proof of Corollary \ref{thm:ldmle}]
\begin{enumerate}[(a)]
\item
Since $\frac{1}{n}\sum_{i=1}^nf(i/n,\pi(i)/n)=\nu_\pi[f]$ and $\nu_f$ converges weakly to $\mu_{f,\theta}$ by Theorem \ref{l1}, the desired conclusion follows.

\item
Fixing $\delta>0$ by part (a) one has
$$LD_n(\pi,\theta_0+\delta)\stackrel{p}{\rightarrow}Z'(\theta_0)-Z'(\theta_0+\delta)<0,\quad LD_n(\pi,\theta_0-\delta)\stackrel{p}{\rightarrow}Z'(\theta_0)-Z'(\theta_0-\delta)>0,$$
and so by continuity and strict monotonicity of $Z'(f,\theta)$ from part (d) of Theorem \ref{l1} it follows that with probability tending to $1$ there exists a unique root $\hat{\theta}_{LD}$ of the equation $LD_n(\pi,\theta)=0$, and $|\hat{\theta}_{LD}-\theta_0|\le \delta$. This proofs the consistency of $\hat{\theta}_{LD}$. The proof of consistency of $\hat{\theta}_{ML}$ follows verbatim by replacing $LD_n(\pi,\theta)$ with $ML_n(\pi,\theta)$.

\item
Since $\hat{\theta}_{LD}$ converges to $\theta_0$ under $\Q_{n,f,\theta_0}$ and to $\theta_1$ under $\Q_{n,f,\theta_1}$ the conclusion follows.

\end{enumerate}
\end{proof}
The following definition will be used in the proof of theorem \ref{approximate}.
\begin{defn}\label{def-1}
For $k\in\mathbb{N}$, partition $[0,1]^2$ into $k^2$  squares $\{T_{rs}\}_{r,s=1}^k$ of length $1/k$, with 
\begin{align*}
T_{rs}:=&\Big\{(x,y)\in T:\lceil kx \rceil =r,\lceil ky \rceil =s\Big\}\text{ for }2\le r,s,\le k,\\
T_{1s}:=&\Big\{(x,y)\in T:\lceil kx \rceil \le 1,\lceil ky \rceil =s\Big\}\text{ for }2\le s\le k,\\
T_{r1}:=&\Big\{(x,y)\in T:\lceil kx \rceil \le 1,\lceil ky \rceil =s\Big\}\text{ for }2\le r\le k,\\
T_{11}:=&\Big\{(x,y)\in T:\lceil kx \rceil \le 1,\lceil ky \rceil \le 1\Big\}.
\end{align*}
Also define the  $k\times k$ matrix $M(\pi)$ by \begin{align}\label{eq:define_M}
M_{rs}(\pi):=\sum_{i=1}^n1\{(i/n,\pi(i)/n)\in T_{rs}\}=n\nu_\pi(T_{rs}).
\end{align}
The definition ensures that $T_{rs}$ is a disjoint partition of $[0,1]^2$, and so sum of the elements of $M(\pi)$ is $n$.  It should be noted that all the sets $T_{rs}$ above are $\mu$ continuity sets for any $\mu\in \sM$. This readily follows from noting that the boundary of $T_{rs}$ is contained in $$\Big\{(x,y):x=\frac{r}{k}\Big\}\cup \Big\{(x,y):x=\frac{r-1}{k}\Big\}\cup \Big\{(x,y):y=\frac{s}{k}\Big\}\cup \Big\{(x,y):y=\frac{s-1}{k}\Big\},$$
which has probability $0$ under any $\mu\in \sM$, as $\mu$ has uniform marginals.
\end{defn}
\begin{defn}\label{def:pA}

For any $k\times k$ matrix $A$  two probability distributions $p_A,\widetilde{p}_A$ on the unit square are defined below:

The measure $p_A$ is  a discrete distribution with the p.m.f. $p_A(r/k,s/k)=A_{rs}$ for $1\le r,s\le k$. The measure 
$\widetilde{p}_A$ has a density with respect to Lebesgue measure given by $p_A(x,y)=:k^2A_{rs}$ for $x,y\in T_{rs},1\le r,s\le k$.  
The assumption $A\in \cM_k$ ensures that both $p_A,\widetilde{p}_A$ are probability measures, and further  $\widetilde{p}_A\in \cM$, i.e. it has uniform marginals.
\end{defn}
\begin{proof}[Proof of Theorem \ref{approximate}]
\begin{enumerate}[(a)]
\item
On applying \cite[Theorem 1,2]{Sink} one gets the conclusion that $B_m$ converges to a matrix $A_{k,\theta}\in \cM_k$ of the form $\Lambda_1B_0\Lambda_2$, where $\Lambda_1$ and $\Lambda_2$ are diagonal matrices.

\item

To begin note that
$$\theta\sum_{r,s=1}^kf(r/k,s/k)A(r,s)-2\log k-\sum_{r,s=1}^kA(r,s)\log A(r,s)=\theta p_A[f]-D(p_A||p_{U_k}),$$
where $U_k\in \cM_k$ is  defined by $U_k(r,s):=\frac{1}{k^2}$. By compactness of $\cM_k$ and strong concavity of $A\mapsto \theta p_A[f]-D(p_A||p_{U_k})$ there is a unique maximizer in $\cM_k$, and by \cite[Theorem 3.1]{Cs} it follows that this maximizer is of the form $D_1B_0D_2$ for some diagonal matrices $D_1,D_2$.   Since both $\Lambda_1B_0\Lambda_2$ and $D_1B_0D_2$ are in $\cM_k$, by the uniqueness of \cite[Theorem 1]{Sink} one has $D_1B_0D_2=\Lambda_1B_0\Lambda_2=A_{k,\theta}$, thus completing the proof of part (b).

\item
 Since the function $(\theta,A)\mapsto \theta p_A[f]-D(p_A||p_{U_k})$ from $\R\times \cM_k$ to $[-\infty,\infty)$ is linear in $\theta$, and has a unique maximizer $A_{k,\theta}$ in $A$  for every $\theta$ fixed,
 the conclusion follows on applying Danskin's theorem \cite[B.5]{Bert}.
 
\item
Since $\mu\mapsto \{\mu(T_{rs})\}_{r,s=1}^k$ is a continuous map, by theorem \ref{ldp} and \cite[Theorem 4.2.1]{DZ} the matrix $\frac{1}{n}M(\pi)$ satisfies a large deviation principle on the set of $k\times k$ matrices with the good rate function $$I_k(A):=\inf_{\mu\in \cM:\mu(T_{rs})=A_{rs},1\le r,s\le k}D(\mu||u)$$ if $A\in \cM_k$, and $+\infty$ otherwise. By \cite[Theorem 3.1]{Cs} the maximum is achieved at $\mu=\widetilde{p}_A$, and so
$$I_k(A)=D(\widetilde{p}_A||u)=\sum_{r,s=1}^kA_{rs}\log A_{rs}+2\log k=D(p_A||p_{U_k}).$$ An application of Varadhan's Lemma gives
\begin{align*}
\frac{1}{n}\log \E_{\P_n}e^{\theta \sum_{r,s=1}^k f(r/k,s/k)M_{rs}(\pi)}
=&\frac{1}{n}\log \E_{\P_n}e^{n\theta \sum_{r,s=1}^k f(r/k,s/k)\nu_\pi(T_{rs})}\\
\stackrel{n\rightarrow\infty}{\rightarrow}&\sup_{A\in \cM_k}\{\theta\sum_{r,s=1}^kf(r/k,s/k)A(r,s)-D({p}_A||u)\}=W_k(f,\theta).
\end{align*} Since
$$| \sum_{r,s=1}^k f(r/k,s/k)\nu_\pi(T_{rs})-\frac{1}{n}\sum_{i=1}^nf(i/n,\pi(i)/n)|\le \sup_{|x_1-x_2|\le 1/k,|y_1-y_2|\le 1/k}|f(x_1,y_1)-f(x_2,y_2)|=:\epsilon_k,$$ it follows that
\begin{align*}
|W_k(f,\theta)-Z(f,\theta)|=\Big|\lim_{n\rightarrow\infty}\frac{1}{n}\log \frac{\E_{\P_n}e^{n\theta \sum_{r,s=1}^k f(r/k,s/k)\nu_\pi(T_{rs})}}{\E_{\P_n}e^{n\theta\sum_{i=1}^n f(i/n,\pi(i)/n)}}\Big|\le |\theta|\epsilon_k.
\end{align*}
By continuity of $f$ one has $\epsilon_k\rightarrow 0$, and so $W_k(f,\theta)$ converges to $Z(f,\theta)$. 
\\

To complete the proof assume that $$p_{A_{k,\theta}}\stackrel{w}{\rightarrow}\mu_{f,\theta}.$$

In this case it follows that
$$W_k'(f,\theta)=p_{A_{k,\theta}}[f]\stackrel{k\rightarrow\infty}{\rightarrow}\mu_{f,\theta}[f]=Z'(f,\theta),$$
and so by Dominated Convergence we have
$\lim_{k\rightarrow\infty}W_k(f,\theta)=Z(f,\theta)$. Finally since $$\lim_{m\rightarrow\infty}\sum_{r,s=1}^kB_m(r,s)\log B_m(r,s)=\sum_{r,s=1}^k A_{k,\theta}(r,s)\log A_{k,\theta}(r,s)$$ by part (a), it follows that
$$Z(f,\theta)=\lim_{k\rightarrow\infty}\lim_{m\rightarrow\infty}\{\theta \sum_{r,s=1}^kf(r/k,s/k)B_m(r,s)-2\log k-\sum_{r,s=1}^kB_m(r,s)\log B_m(r,s)\},$$
which is the desired conclusion. 
\\

It thus remains to show that $p_{A_{k,\theta}}$ converges weakly to $\mu_{f,\theta}$ as $k\rightarrow\infty$. Since the set of probability measures on $[0,1]^2$ is compact, the sequence $p_{A_{k,\theta}}$ is tight. If $\mu\neq \mu_{f,\theta}$ be a limit point, then by joint lower semi continuity of $D(.||.)$ one has
$$\limsup_{k\rightarrow\infty}W_k(f,\theta)=\limsup_{k\rightarrow\infty}\{\theta p_{A_{k,\theta}}[f]-D(p_{A_{k,\theta}}||p_{U_k})\}\le\theta \mu[f]-D(\mu||u)<Z(f,\theta).$$
But this is a contradiction to the fact that $W_k(f,\theta)$ converges to $Z(f,\theta)$, and hence $p_{A_{k,\theta}}$ does indeed converges to $\mu_{f,\theta}$. This completes the proof of the theorem.

\end{enumerate}

\end{proof}
%
%

Before proving Theorem \ref{est},  a  general lemma is stated which constructs $\sqrt{n}$ consistent estimates of $\theta$ in permutation models. The idea of this proof is taken from \cite{Ch}.
\begin{lem}\label{general}
Let $\R_{n,\theta}$ be any one parameter family on $S_n$, and let $G_n(\pi,\theta)$ be a function on $S_n\times \R$ which is differentiable in $\theta$.

Suppose  the following two conditions hold:
\begin{enumerate}[(a)]
\item
For every $\theta_0\in \R$ there exists a constant $C=C(\theta_0)$ such that
\begin{align}\label{cnc1}\mathbb{E}_{\mathbb{\R}_{n\theta_0}}G_n(\pi,\theta_0)^2\le C n^3\end{align}

\item
There exists a strictly positive continuous function $\lambda:\R\mapsto\R$  such that
\begin{align}
\label{cnc2}
\lim_{n\rightarrow\infty}\mathbb{R}_{n,\theta_0}(G_n'(\pi,\theta)\leq -n^2 \lambda(\theta),\forall\theta\in \R)=1.
\end{align}

Then the equation $G_n(\pi,\theta)=0$ has a unique root  in $\theta$. Further denoting this unique root by $\hat{\theta}_n$ one has $\sqrt{n}(\hat{\theta}_n-\theta_0)$ is $O_P(1)$  under $\R_{n,\theta_0}$.
\end{enumerate}

\end{lem}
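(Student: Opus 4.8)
The plan is to run a standard one-step / Taylor-expansion argument of the type used to prove $\sqrt n$-consistency of $Z$-estimators, with the role of the Fisher information played by the strictly negative bound on $G_n'$ in \eqref{cnc2}. First I would record the two immediate consequences of the hypotheses. From \eqref{cnc1} and Chebyshev's inequality, $G_n(\pi,\theta_0) = O_P(n^{3/2})$ under $\R_{n,\theta_0}$; so for any sequence $K_n\to\infty$ we have $\R_{n,\theta_0}\big(|G_n(\pi,\theta_0)| > K_n n^{3/2}\big)\to 0$. From \eqref{cnc2}, on an event $E_n$ with $\R_{n,\theta_0}(E_n)\to 1$ the map $\theta\mapsto G_n(\pi,\theta)$ is strictly decreasing on all of $\R$ with derivative $\le -n^2\lambda(\theta) < 0$; in particular on $E_n$ the equation $G_n(\pi,\theta)=0$ has \emph{at most} one root. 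Existence of a root on $E_n$ follows because a continuous strictly decreasing function whose derivative stays bounded away from $0$ on every compact set (here by $-n^2\inf_{[\theta_0-1,\theta_0+1]}\lambda>-\infty$, and $\lambda$ is continuous and strictly positive so this inf is positive) must cross $0$: more concretely, integrating \eqref{cnc2} gives, for $\delta>0$,
\[
G_n(\pi,\theta_0+\delta)\le G_n(\pi,\theta_0) - n^2\delta\,\underline\lambda,\qquad G_n(\pi,\theta_0-\delta)\ge G_n(\pi,\theta_0) + n^2\delta\,\underline\lambda,
\]
where $\underline\lambda:=\inf_{[\theta_0-\delta,\theta_0+\delta]}\lambda>0$. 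On the event that also $|G_n(\pi,\theta_0)|\le K_n n^{3/2}$, the right-hand sides are respectively negative and positive as soon as $n^2\delta\underline\lambda > K_n n^{3/2}$, i.e. $\delta > K_n/(\underline\lambda\sqrt n)$; so $G_n$ changes sign on $[\theta_0-\delta,\theta_0+\delta]$ and the (unique) root $\hat\theta_n$ lies in that interval.

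Turning this into the rate: fix $M>0$ and choose $K_n\equiv M$ in the above; the conclusion is that, with $\delta_n := M/(\underline\lambda\sqrt n)$ (and $\underline\lambda$ now the inf of $\lambda$ over the fixed interval $[\theta_0-1,\theta_0+1]$, which I may assume contains $[\theta_0-\delta_n,\theta_0+\delta_n]$ for $n$ large),
\[
\R_{n,\theta_0}\big(|\hat\theta_n - \theta_0| \le \delta_n\big)\ \ge\ \R_{n,\theta_0}\big(E_n\cap\{|G_n(\pi,\theta_0)|\le M n^{3/2}\}\big)\ \longrightarrow\ 1 - \limsup_n \R_{n,\theta_0}\big(|G_n(\pi,\theta_0)|> M n^{3/2}\big),
\]
and by \eqref{cnc1} the last $\limsup$ is at most $C/M^2$. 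Hence $\limsup_n \R_{n,\theta_0}\big(\sqrt n|\hat\theta_n-\theta_0| > M/\underline\lambda\big)\le C/M^2$, which can be made as small as desired by taking $M$ large; this is exactly the statement that $\sqrt n(\hat\theta_n-\theta_0)=O_P(1)$.

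I do not expect a genuine obstacle here — the argument is soft once both hypotheses are in hand — but the one point needing care is the \emph{global} nature of the uniqueness claim versus the \emph{local} nature of the rate: the root's uniqueness on all of $\R$ uses \eqref{cnc2} at every $\theta$, whereas pinning down $\hat\theta_n$ near $\theta_0$ only needs \eqref{cnc2} on a fixed neighbourhood of $\theta_0$ together with the second-moment bound \eqref{cnc1} evaluated \emph{at} $\theta_0$. One should also be mildly careful that $G_n$ is assumed only differentiable (hence continuous) in $\theta$, so the intermediate-value and mean-value steps are legitimate; no measurability or integrability issue beyond \eqref{cnc1} arises because everything is done pathwise on the high-probability event $E_n$ and then combined with one Chebyshev estimate.
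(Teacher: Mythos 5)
Your proposal is correct and follows essentially the same route as the paper's proof: Chebyshev applied to \eqref{cnc1} to get $|G_n(\pi,\theta_0)|\le Mn^{3/2}$ with high probability, the derivative bound \eqref{cnc2} to get strict monotonicity (uniqueness) and a sign change (existence), and the integrated derivative bound to convert $|G_n(\pi,\theta_0)|\le Mn^{3/2}$ into $\sqrt n\,|\hat\theta_n-\theta_0|\le M/\underline\lambda$. The only cosmetic difference is that the paper first localizes the root to $(\theta_0-1,\theta_0+1)$ and then integrates $\lambda$ between $\hat\theta_n$ and $\theta_0$, whereas you localize directly to the shrinking interval of radius $M/(\underline\lambda\sqrt n)$; the content is identical.
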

\begin{proof}

Fixing a large positive real $M$ let $A_n$ denote the set \[A_n:=\{\pi\in S_n:|G_n(\pi,\theta_0)|\leq n^{3/2}M,\ G'_n(\pi,\theta)\leq -n^2\lambda(\theta),\theta\in \R\}.\]  
Then for $\pi \in S_n$ one has 
\[G_n(\pi,\theta_0+1)=G_n(\pi,\theta_0)+\int_{\theta_0}^{\theta_0+1}G_n'(\pi,\theta)d\theta\leq n^{3/2}M-n^2 \inf_{\theta\in[\theta_0,\theta_0+1]}\lambda(\theta),\]  which is negative for all large $n$.
 Similarly it can be shown that $G_n(\pi,\theta_0-1)>0$ for $\pi\in A_n$. Also note that  $G_n(\pi,\theta)$ is strictly monotone on $A_n$, and so  by continuity of $\theta\mapsto G_n(\pi,\theta)$ there exists a unique $\hat{\theta}_n$ satisfying $G_n(\pi,\hat{\theta}_n)=0$, and $\theta_0-1 <\hat{\theta}_n<\theta_0+1$. Finally one has
\[n^{3/2}M\geq |G_n(\pi,\theta_0)|=|G_n(\pi,\theta_0)-G_n(\pi,\hat{\theta}_n)|\geq n^2|\int_{\hat{\theta}_n}^{\theta_0}\lambda(\theta)d\theta| \ge \Big[\inf_{|\theta-\theta_0|\le 1}\lambda(\theta)\Big]|\hat{\theta}_n-\theta_0|,\] and so $\sqrt{n}|\hat{\theta}_n-\theta_0|\leq KM$, where  $K:=[\inf_{|\theta-\theta_0|\leq 1}\lambda(\theta)]^{-1}<\infty$. Thus using \eqref{cnc1} and \eqref{cnc2} gives
\begin{align*}
\limsup_{n\rightarrow\infty}\R_{n,\theta_0}(|\hat{\theta}_n-\theta_0|>KM)\le& \limsup_{n\rightarrow\infty}\R_{n,\theta}(|G_n(\pi,\theta_0)|\ge M n^{3/2})\\\le &\limsup_{n\rightarrow\infty}\frac{1}{M^2n^3}\E_{\R_{n,\theta_0}}G_n(\pi,\theta_0)^2\le \frac{C}{M^2}.
\end{align*}
Since the r.h.s. above can be made arbitrarily small by choosing $M$ large, the proof of the lemma is complete.

\end{proof}

\begin{proof}[Proof of Theorem \ref{est}]
 It suffices to check the two  conditions \eqref{cnc1} and \eqref{cnc2} of Lemma \ref{general} with $\R_{n,\theta}=\Q_{n,f,\theta}$ and $G_n(\pi,\theta)=PL_n(\pi,\theta)$. For checking (\ref{cnc1})  an exchangeable pair is constructed. 
 \\

  Consider the following exchangeable pair of permutations $(\pi,\pi')$ on $S_n$ constructed as follows:
  
Pick $\pi$ from $\Q_{n,f,\theta}$. To construct $\pi'$,  first pick a pair $(I,J)$ uniformly from 
the set of all ${n\choose 2}$ pairs
$\{(i,j):1\leq i<j\leq n\}$, and replace $(\pi(I),\pi(J))$ by an independent pick from the conditional distribution $(\pi(I),\pi(J)|\pi(k),k\neq I,J)$. 
By a simple calculation, the probabilities turn out to be 
\begin{align*}
(\pi'(I),\pi'(J))=(\pi(I),\pi(J)) \text{ w.p. } &\Q_{n,f,\theta}(\pi(I')=\pi(I),\pi(J')=\pi(J)|\pi(k),k\neq I,J)\\
=&\frac{1}{1+e^{\theta y_\pi(I,J)}},\\
=(\pi(J),\pi(I)) \text{ w.p. }& \Q_{n,f,\theta}(\pi(I')=\pi(J),\pi(J')=\pi(I)|\pi(k),k\neq I,J)\\
=& \frac{e^{\theta y_\pi({I,J)}}}{1+e^{\theta y_\pi(I,J)}}.
\end{align*}
Set $\pi'(i)=\pi(i)$ for all $i\neq I,J$. It can be readily checked that $(\pi,\pi')$ is indeed an exchangeable pair. Also  defining \[W(\pi):=\sum_{i=1}^nf(i/n,\pi(i)/n),\text{ and } F(\pi,\pi'):=W(\pi)-W(\pi')\] one can check from the construction of $(\pi,\pi')$  that $$\mathbb{E}_{\Q_{n,f,\theta}}[F(\pi,\pi')|\pi]=W(\pi)-\E _{\Q_{n,f,\theta}}[W(\pi')|\pi]=\frac{1}{N_n}PL_n(\pi,\theta),$$
where $P_n(\pi,\theta)$ is as defined in the statement of the Lemma, and $N_n:=\frac{n(n-1)}{2}$.
 Thus  \begin{align*}
\mathbb{E}_{\Q_{n,f,\theta}}PL_n(\pi,\theta)^2=&N_n\E_{\Q_{n,f,\theta}}PL_n(\pi,\theta)[\E_{\Q_{n,f,\theta}}F(\pi,\pi')|\pi]\\=&N_n\mathbb{E}_{\Q_{n,f,\theta}}PL_n(\pi,\theta)F(\pi,\pi')\\
=&N_n\mathbb{E}_{\Q_{n,f,\theta}}PL_n(\pi',\theta)F(\pi',\pi)\\
=&-N_n\mathbb{E}_{\Q_{n,f,\theta}}PL_n(\pi',\theta)F(\pi,\pi')\\
=&\frac{N_n}{2}\E_{\Q_{n,f,\theta}}(PL_n(\pi,\theta)-PL_n(\pi',\theta))F(\pi,\pi')
\end{align*}
where the third line uses the exchangeability of  $(\pi,\pi')$, and the fourth line uses antisymmetry $F$, and the last line is obtained by adding the second and fourth lines together and dividing by 2. This readily implies 
 \begin{align}\label{p1}\mathbb{E}_{\Q_{n,f,\theta}}PL_n(\pi,\theta)^2=&\E_{\Q_{n,\theta}}V_n(\pi)\end{align} 

where $V_n(\pi)=\frac{N_n}{2} \mathbb{E}_{\Q_{n,f,\theta}}[(PL_n(\pi,\theta)-PL_n(\pi',\theta))F(\pi,\pi')|\pi)$.
Letting $\pi^{ij}$ denote  $\pi$ with the elements $(\pi(i),\pi(j))$ swapped,  $V_n(\pi)$ can be written as
\begin{align}\label{eq:v_pi} 
V_n(\pi)=\frac{1}{2}\sum_{1\le i<j\le n}&\Big[PL_n(\pi,\theta)-PL_n(\pi^{ij},\theta)\Big]\frac{y_\pi(i,j)e^{\theta y_\pi(i,j)}}{{1+e^{\theta y_\pi(i,j)}}}.
\end{align}
Also setting $M:=4\sup_{[0,1]^2}|f|$  for any $(i,j)$ one has
\begin{align*}
|PL_n(\pi,\theta)-PL_n(\pi^{ij},\theta)|\leq 4nM,  
\end{align*}
using the fact that $|y_\pi(i,j)|\le M$.
This along with equation \eqref{eq:v_pi} gives $|V_n(\pi)|\leq 4n^3M^2$, which, along with (\ref{p1}), completes the proof of (\ref{cnc1}) with $C=4M^2$.

Proceeding to check (\ref{cnc2}) one has
\begin{align*}
-\frac{1}{n^2}PL_n'(\pi,\theta)=\frac{1}{n^2}\sum_{1\leq i<j\leq n}y_\pi(i,j)^2\frac{e^{\theta y_\pi(i,j)}}{1+e^{\theta y_\pi(i,j)}}\frac{1}{1+e^{\theta y_\pi(i,j)}}\geq \frac{e^{-|\theta| M}}{8n^2}\sum_{i,j=1}^ny_\pi(i,j)^2,
\end{align*}
where  the last inequality  again uses  $|y_\pi(i,j)|\leq M$.
Since the function  $g:[0,1]^4\mapsto \R$  defined by $$g((x_1,y_1),(x_2,y_2)):=\Big[f(x_1,y_1)+f(x_2,y_2)-f(x_1,y_2)-f(x_2,y_1)\Big]^2$$  is continuous, it follows that $\nu_\pi\times \nu_\pi\stackrel{w}{\rightarrow}\mu_{f,\theta_0}\times \mu_{f,\theta_0}$ in probability by part (b) of theorem \ref{l1}. This gives
\begin{align*}
&\frac{1}{n^2}\sum_{i=1}^ny_\pi(i,j)^2
=\frac{1}{n^2}\sum_{i,j=1}^ng((i/n,\pi(i)/n),(j/n,\pi(j)/n))
=(\nu_\pi\times \nu_\pi)(g)\\
&\stackrel{p}{\rightarrow}\int_{[0,1]^4}\Big[f(x_1,y_1)+f(x_2,y_2)-f(x_1,y_2)-f(x_2,y_1)\Big]^2d\mu_{f,\theta_0}(x_1,y_1)d\mu_{f,\theta_0}(x_2,y_2)=:\alpha(\theta),\text{ say}.
\end{align*}
If $\alpha(\theta)=0$, then $f(x_1,y_1)+f(x_2,y_2)=f(x_1,y_2)+f(x_2,y_1)$ almost surely. On integrating with respect to $x_2,y_2$ and using the fact that $f\in \cC$ gives $f(x_1,y_1)\equiv 0$, a  contradiction. Thus $\alpha(\theta)>0$, 
 and so  (\ref{cnc2}) holds with $\lambda(\theta)=e^{-M|\theta|}\alpha(\theta)/16$. Thus both conditions of Lemma \ref{general} hold,  and so  the conclusion follows.

\end{proof}

\begin{proof}[Proof of Proposition \ref{mallow}]

\begin{enumerate}[(a)]
\item
First it will be shown that  $\mu\mapsto \theta [\mu\times \mu](h)/2$ is continuous with respect to weak topology on $\sM$. Since $\sM$ is separable, it suffices to work with sequences, and it suffices to check the following:
\[\mu_k\in \sM, \mu_k\stackrel{w}{\rightarrow}\mu\Rightarrow (\mu_k\times \mu_k)(x_1\leq x_2,y_1\leq y_2)\rightarrow \mu(x_1\leq x_2,y_1\leq y_2)\]
But this follows from the fact that the boundary  of the set $\{x_1\leq x_2,y_1\leq y_2\}$ is 
a subset of $\{x_1= x_2,0\le y\le 1\}\cup \{0\le x\le 1, y_1=y_2\}$, and $\mathbb{P}(X_1= X_2)=0$ where $X_1,X_2$ are i.i.d. with distribution $U[0,1]$.
Thus $\mu\mapsto \theta[\mu\times \mu](h)/2$ is continuous on $\sM\supset \{\mu:\overline{I}(\mu)<\infty\}$. 
\\

Now, a similar computation as in the proof of Theorem \ref{l1} gives
\[e^{C_n(\theta)-C_n(0)}=\frac{1}{n!}\sum_{\pi\in S_n}e^{\frac{\theta}{n}\sum_{1\leq i< j\leq n}h((i/n,\pi(i)/n),(j/n,\pi(j)/n))}=\mathbb{E}_{\mathbb{P}_n}e^{n\frac{\theta}{2} [\nu_\pi\times \nu_\pi](h)}.\]

It  then follows by an application of Varadhan's Lemma (\cite[Theorem 4.3.1]{DZ}) along with theorem \ref{ldp}  (on noting that the proof of Varadhan's lemma goes through as long as the function $\mu\mapsto \theta(\mu\times \mu)(h)/2$ is continuous on the set $\{\overline{I}(\mu)<\infty\}$), that
$$C(\theta)=\lim_{n\rightarrow\infty}\frac{C_n(\theta)-C_n(0)}{n}=\sup_{\mu\in \cM}\Big\{\frac{\theta}{2}(\mu\times \mu)(h)-D(\mu||u)\Big\}.$$

The optimization problem was solved in \cite{SS} to show that there is a unique maximizer in $\cM$, and it has the density $u_\theta(.,.)$ with respect to Lebesgue measure. Plugging in the formula for $u_\theta(.,.)$ gives the formula for $C(\theta)$.

%

\item
Since in this case the function $C(\theta)$ is convex, differentiable with a derivative which is continuous and monotone increasing, consistency of $\widetilde{\theta}_{LD}$ and $\widetilde{\theta}_{ML}$ follow from similar arguments as in Corollary \ref{thm:ldmle}.


%
%
%
%
%
%
%
%
%
\end{enumerate}

\end{proof}

\section{Appendix: Proof of Theorem \ref{ldp}}\label{appen:2}
The proof is carried out  by using \cite[Theorem 4.1.11]{DZ} by choosing a suitable base for the weak topology.

\begin{defn}\label{def0}
Let  $\cM_{k,n}$ denote the number of non negative integer valued $k\times k$ matrices with  $r^{th}$ row sum  equal to $M_r:=\lceil \frac{nr}{k}\rceil-\lceil\frac{n(r-1)}{k}\rceil$ and $s^{th}$ column sum equal to $\lceil \frac{ns}{k}\rceil-\lceil\frac{n(s-1)}{k}\rceil$, i.e. 
\[\cM_{k,n}:=\Big(M\in\mathbb{N}_0^{k^2}:\sum_{s=1}^kM_{rs}=M_r, \sum_{r=1}^kM_{rs}=M_s\Big),\]
where $\mathbb{N}_0:=\mathbb{N}\cup \{0\}$.
 Note that any $M\in \cM_{k,n}$ satisfies $\sum_{r,s=1}^kM_{rs}=n$.  Recall the matrix $M(\pi)$ defined in  \eqref{eq:define_M} as the $k\times k$ matrix with $M_{rs}(\pi)=n\nu_\pi[T_{rs}]$.
\end{defn}
If $\pi$ is random, $M(\pi)$ is a random matrix. The first lemma gives the  distribution of  $M(\pi)$  when $\pi\sim \P_n$.

\begin{lem}\label{aux}
The  distribution of $M(\pi)$ is given by 
\[ \mathbb{P}_n(M(\pi)=M)=\frac{\Big(\prod_{r=1}^kM_{r}!\Big)^2}{n!\prod_{r,s=1}^kM_{rs}!}\]
if $M\in \cM_{k,n}$, 
and $0$ otherwise. 

\end{lem}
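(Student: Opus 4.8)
The plan is to prove this by direct enumeration: for a fixed $M\in\cM_{k,n}$ I will count the permutations $\pi\in S_n$ with $M(\pi)=M$ and divide by $n!$, since $\pi\sim\P_n$ is uniform. The first step is to translate the geometric description of $M_{rs}(\pi)$ into combinatorics. The partition $\{T_{rs}\}$ of Definition \ref{def-1} induces, through the first coordinate, a partition of $[n]$ into consecutive blocks $R_1,\dots,R_k$ with $|R_r|=M_r$, and through the second coordinate a partition of $[n]$ into consecutive blocks $C_1,\dots,C_k$ with $|C_s|=M_s$; both sizes are given by the same formula $M_r=\lceil nr/k\rceil-\lceil n(r-1)/k\rceil$ of Definition \ref{def0}. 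In this notation $M_{rs}(\pi)=|\{i\in R_r:\pi(i)\in C_s\}|=|\pi(R_r)\cap C_s|$, so that $M(\pi)$ is exactly the ``contingency table'' of the bijection $\pi$ relative to the fixed row and column block structure; in particular $M(\pi)$ always lies in $\cM_{k,n}$, which already disposes of the case $M\notin\cM_{k,n}$.

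Next I would decompose any $\pi$ with $M(\pi)=M$ into three independent pieces of data. First, for each $r$ the fibers $P_{rs}:=R_r\cap\pi^{-1}(C_s)$, $s=1,\dots,k$, form an ordered set partition of $R_r$ into blocks of sizes $M_{r1},\dots,M_{rk}$, and there are $M_r!/\prod_s M_{rs}!$ of these. Second, for each $s$ the sets $Q_{rs}:=C_s\cap\pi(R_r)$, $r=1,\dots,k$, form an ordered set partition of $C_s$ into blocks of sizes $M_{1s},\dots,M_{ks}$, contributing $M_s!/\prod_r M_{rs}!$. Third, for each pair $(r,s)$ the restriction $\pi|_{P_{rs}}$ is a bijection of $P_{rs}$ onto $Q_{rs}$, two sets of common cardinality $M_{rs}$, so there are $M_{rs}!$ possibilities. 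I would then check that $\pi\mapsto\big((P_{rs}),(Q_{rs}),(\pi|_{P_{rs}})\big)$ is a bijection from $\{\pi:M(\pi)=M\}$ onto the set of all such triples of (partitions, partitions, block-bijections): the triple manifestly determines $\pi$, and conversely any admissible choice reassembles into a unique permutation whose contingency table is $M$. Multiplying the three counts and using $\prod_r M_r!=\prod_s M_s!$,
\[
\#\{\pi\in S_n:M(\pi)=M\}=\Big(\prod_{r=1}^k\frac{M_r!}{\prod_{s=1}^k M_{rs}!}\Big)\Big(\prod_{s=1}^k\frac{M_s!}{\prod_{r=1}^k M_{rs}!}\Big)\prod_{r,s=1}^k M_{rs}!=\frac{\big(\prod_{r=1}^k M_r!\big)^2}{\prod_{r,s=1}^k M_{rs}!},
\]
and dividing by $n!$ gives the stated formula.

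The argument is essentially routine; the only points requiring care are verifying that the three-piece decomposition is a genuine bijection, with no double counting, and the minor bookkeeping confirming that the blocks $R_r$, $C_s$ extracted from Definition \ref{def-1} indeed have sizes $M_r$, $M_s$ with the same formula, so that the two multinomial factors combine into $\big(\prod_r M_r!\big)^2$ after cancellation of $\prod_{r,s}M_{rs}!$.
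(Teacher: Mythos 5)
Your proof is correct and is essentially the same as the paper's: both count the permutations with a given contingency table $M$ by multiplying the number of ways to distribute the indices among the row blocks ($\prod_r M_r!/\prod_{r,s}M_{rs}!$), the number of ways to distribute the targets among the column blocks ($\prod_s M_s!/\prod_{r,s}M_{rs}!$), and the number of bijections within each cell ($\prod_{r,s}M_{rs}!$), then divide by $n!$. Your write-up merely makes the underlying three-piece bijection explicit, which the paper leaves informal.
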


\begin{proof}

Since \[M_{r,s}(\pi)=\sum_{i=1}^n1\Big\{\Big\lceil \frac{ki}{n}\Big\rceil =r,\Big\lceil \frac{k\pi(i)}{n}\Big\rceil =s\Big\},\]
it follows that \[\sum_{s=1}^kM_{r,s}(\pi)=\sum_{i=1}^n1\Big\{\Big\lceil \frac{ki}{n}\Big\rceil =r\Big\}=M_r,\]
and so any valid configuration $M$ is in $\cM_{k,n}$.  So fixing a particular configuration $M\in  \cM_{k,n}$, the number of possible permutations $\pi$ compatible with this configuration can be computed as follows:
\\

For the $r^{th}$ row there are $M_r$ choices of indices $i$, and that can be allocated in boxes $\{T_{r,s}\}_{s=1}^k$ in $M_r!/\prod_{s=1}^kM_{rs}!$ ways, so that box $T_{r,s}$ receives $M_{r,s}$  indices. Taking a product over $r$, the number of ways to distribute the indices over the boxes is \[\frac{\prod_{r=1}^kM_r!}{\prod_{r,s=1}^kM_{rs}!}\]
Similarly, the number of ways to distribute the targets $\{\pi(i)\}$ such that box $T_{r,s}$ receives $M_{rs}$ targets is  \[\frac{\prod_{s=1}^kM_s!}{\prod_{r,s=1}^kM_{rs}!}\]
Finally after the above distribution box $T_{r,s}$ has $M_{rs}$ indices and $M_{rs}$ targets, which can then be permuted freely, and so the total number of permutations compatible with  any such distribution of indices and targets is \[\prod_{r,s=1}^kM_{rs}!\] Combining, the total number of possible permutations $\pi$ satisfying $M(\pi)=M$  is given by \[\frac{\prod_{r=1}^kM_r!\prod_{s=1}^kM_s!}{\prod_{r,s=1}^kM_{rs}!}\] Since the total number of permutations in $n!$, the proof of the claim is complete. 
\end{proof}

\begin{remark}
Note that in the above proposition the row and column sums of the matrix $M$ are free of $\pi$. The distribution of $M$ is a multivariate generalization of the hypergeometric distribution, commonly known as the Fisher-Yates distribution. This distribution  arises in statistics while testing for independence in a 2-way table in the works of Diaconis-Efron (\cite{pd_efron_II},\cite{pd_efron_I}).
\end{remark}
Before proceeding the following definitions are needed. The first definition gives  a base for the weak topology on $\sM$. 
\begin{defn}\label{def1}
For any $\mu\in \sM$ define $P_{k,\mu}\in [0,1]^{k^2}$  by setting $P_{k,\mu}(r,s):=\mu(T_{r,s})$. Note that $T_{rs}$ is a $\mu $ continuity set, and so the map $\mu\mapsto P_{k,\mu}$ is continuous on $\sM$ with respect to weak convergence. 


One can now define a base for the weak topology on $\sM$ as follows:
Fix  $k\in \mathbb{N},\epsilon>0,\mu_0\in \sM$, and  define the  set $$\ub(\epsilon):=\{\mu\in\sM:||P_{k,\mu}-P_{k,\mu_0}||_\infty<\epsilon\},$$ 
where  $$||P_{k,\mu}-P_{k,\mu_0}||_\infty:=\max_{1\le r,s\le k}|P_{k,\mu}(r,s)-P_{k,\mu_0}(r,s)|.$$
Since $\mu\mapsto P_{k,\mu}$ is continuous, the set $\ub(\epsilon)$ is open in $\sM$.  Recall the definition of $\cM_k$ from definition \ref{def:mk}  and that for any $\mu\in \cM $ one has $\P_{k,\mu}\in \cM_k$. Thus the operation $A\mapsto p_A$ introduced in defintiion \ref{def:pA} maps a matrix to a probability measure, and the operation  $\mu\mapsto P_{k,\mu}$ maps a probability measure to a matrix.
\end{defn}
\begin{ppn}
The collection
$$\sM_0:=\{\ub(\epsilon):k\in\mathbb{N};\epsilon>0,\mu_0\in \sM\}$$ is a base for the weak convergence on $\sM$. 
\end{ppn}

\begin{proof}
One needs to verify that given any $\mu_0$ and an open set $U$ containing $\mu_0$, there is an element $U_0$ from this collection $\sM_0$ such that $\mu_0\in U_0\subset U$. If not, then in particular the set $\sM[k,\mu_0](1/k)$ is not contained in $U$ for any $k$, and so there exists $\mu_k\in \sM[k,\mu_0](1/k)\cap U^c$.  Then for any function $f$ which is continuous on the unit square, one has
\begin{align*}
|\mu[f]-\mu_k[f]|\le \max_{[0,1]^2}|f|||P_{k,\mu}-P_{k,\mu_0}||_\infty+2\sup_{|x_1-x_2|,|y_1-y_2|\le 1/k}|f(x_1,y_1)-f(x_2,y_2)|,
\end{align*}
which goes to $0$ as $k$ goes to $\infty$. Thus $\mu_k$ converges weakly to $\mu$, and since $U$ is open, one has that $\mu_k\in U$ for all large $k$. This is a contradiction to the assumption that $\mu_k\notin U$, and so completes the proof.
\end{proof}
 
 This reduces the analysis of measures to the analysis of $k\times k$ matrices for a large but fixed $k$.
 \begin{defn}
  For $\mu_0\in \cM$ define a set $\lbl(\epsilon)\subset \mathcal{M}_k$ as 
\begin{align*}
\lbl(\epsilon):=\{A\in \cM_k:||M-P_{k,\mu_0}||_\infty<\epsilon\}.
\end{align*}
Since $M(\pi)\in \cM_{k,n}$ is an integer valued matrix, all configurations in $ \lbl(\epsilon)$ cannot be attained by setting $A=M(\pi)/n$. Define $\llb(\epsilon)$ to be the set of all $M\in \cM_{k,n}$ such that $\frac{1}{n}M\in \lbl$. More precisely, $\llb(\epsilon)$ is defined by
\begin{align*}
\llb(\epsilon):=\cM_{k,n}\cap n\lbl(\epsilon)=\Big\{M\in\cM_{k,n}:||\frac{1}{n}M-P_{k,\mu_0}||_\infty<\epsilon\}.
\end{align*}
\end{defn}
The following lemma gives an estimate of the probability that $M(\pi)\in \llb(\epsilon)$. 
\begin{lem}\label{dist}

\[\lim_{n\rightarrow\infty}\frac{1}{n}\log \mathbb{P}_n(M(\pi)\in \llb(\epsilon))=-\inf_{A\in\lbl(\epsilon)}D(p_A||p_{U_k}),\]
where $p_A$ is as in definition \ref{def:pA}.
\end{lem}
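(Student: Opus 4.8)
The plan is to compute the limit by combining the exact combinatorial formula for $\mathbb{P}_n(M(\pi)=M)$ from Lemma~\ref{aux} with Stirling's approximation, and then to convert the resulting sum over $\llb(\epsilon)$ into the claimed infimum of relative entropies. First I would write
\[
\mathbb{P}_n(M(\pi)\in\llb(\epsilon))=\sum_{M\in\llb(\epsilon)}\frac{\big(\prod_{r=1}^kM_r!\big)^2}{n!\prod_{r,s=1}^kM_{rs}!}.
\]
Since the number of terms in this sum is at most polynomial in $n$ (the entries $M_{rs}$ range over $\{0,1,\dots,n\}$, so $|\llb(\epsilon)|\le (n+1)^{k^2}$), taking $\frac1n\log$ turns the sum into a maximum up to an $o(1)$ error: it suffices to show
\[
\frac1n\log\max_{M\in\llb(\epsilon)}\frac{\big(\prod_{r}M_r!\big)^2}{n!\prod_{r,s}M_{rs}!}\longrightarrow -\inf_{A\in\lbl(\epsilon)}D(p_A\|p_{U_k}).
\]

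Next I would apply Stirling's formula $\log m!=m\log m-m+O(\log m)$ to each factorial. The row sums satisfy $M_r=\lceil nr/k\rceil-\lceil n(r-1)/k\rceil=n/k+O(1)$, so $\log(\prod_r M_r!)=2\cdot\frac{n}{k}\cdot(-\log k)\cdot\frac{k}{1}\cdot\frac12+\dots$; more carefully $\sum_r\log M_r!=n\log(n/k)-n+O(k\log n)=n\log n-n\log k-n+O(k\log n)$. Similarly $\log n!=n\log n-n+O(\log n)$. Writing $A_{rs}=M_{rs}/n$, one has $\sum_{r,s}\log M_{rs}!=\sum_{r,s}\big(M_{rs}\log M_{rs}-M_{rs}\big)+O(k^2\log n)=n\log n\cdot\!\sum A_{rs}+n\sum_{r,s}A_{rs}\log A_{rs}-n+O(k^2\log n)$, using $\sum A_{rs}=1$. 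Collecting terms, the exponent of the whole expression divided by $n$ equals
\[
-2\log k-\sum_{r,s=1}^kA_{rs}\log A_{rs}+O\!\left(\frac{k^2\log n}{n}\right)=-D(p_A\|p_{U_k})+o(1),
\]
where I use $D(p_A\|p_{U_k})=\sum_{r,s}A_{rs}\log(A_{rs}\cdot k^2)=\sum A_{rs}\log A_{rs}+2\log k$ from Definition~\ref{def:pA}. Taking the maximum over $M\in\llb(\epsilon)$, equivalently over $A\in\lbl(\epsilon)\cap\frac1n\mathcal M_{k,n}$, gives the upper and lower bounds, with the limit equal to $-\inf_{A\in\lbl(\epsilon)}D(p_A\|p_{U_k})$ once I argue that the infimum over the lattice points $\frac1n M$ converges to the infimum over the continuum set $\lbl(\epsilon)$.

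The main obstacle is this last density/approximation issue: one must check that $\min_{M\in\llb(\epsilon)}D(p_{M/n}\|p_{U_k})\to\inf_{A\in\lbl(\epsilon)}D(p_A\|p_{U_k})$. For the easy (upper bound on probability) direction this is automatic since $\frac1n\mathcal M_{k,n}\subset\lbl(\epsilon)$ gives $\min\ge\inf$. For the other direction I would take a near-minimizer $A^\ast\in\lbl(\epsilon)$ of $D(\cdot\|p_{U_k})$ with, say, all row and column sums exactly $1/k$; I then need to round $nA^\ast$ to an integer matrix $M$ with the prescribed row sums $M_r$ and column sums $M_s$ (these are within $O(1)$ of $n/k$) while keeping $\|M/n-A^\ast\|_\infty=O(1/n)$ and $M\in\mathcal M_{k,n}$. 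Such a rounding exists by a standard transportation-polytope / flow argument (the integer points are dense in the polytope up to $\ell^\infty$-distance $O(1/n)$), and continuity of $A\mapsto D(p_A\|p_{U_k})$ on the (compact) region where it is finite — together with the fact that one can first perturb $A^\ast$ slightly into the interior to avoid vanishing entries — then gives $D(p_{M/n}\|p_{U_k})\le\inf_{A\in\lbl(\epsilon)}D(p_A\|p_{U_k})+o(1)$. Everything else is bookkeeping with Stirling and the polynomial bound on $|\llb(\epsilon)|$.
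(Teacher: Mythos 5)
Your proposal is correct, and its skeleton — Lemma~\ref{aux} plus Stirling, the polynomial bound $|\llb(\epsilon)|\le (n+k^2)^{k^2}$ to replace the sum by its largest term, and then the reduction of the lattice minimum $\min_{M\in\llb(\epsilon)}D(p_{M/n}||p_{U_k})$ to the continuum infimum $\inf_{A\in\lbl(\epsilon)}D(p_A||p_{U_k})$ — is exactly the paper's. The one place you genuinely diverge is the approximation step, i.e.\ the claim that every $A\in\lbl(\epsilon)$ is an $n\to\infty$ limit of points $\frac1nM^{(n)}$ with $M^{(n)}\in\llb(\epsilon)$. The paper gets this by passing through permutation limits: it picks $\mu\in\sM$ with $P_{k,\mu}=A$ and invokes Lemmas 4.2 and 5.3 of \cite{HKMRS} to produce permutations $\sigma_n$ with $\nu_{\sigma_n}\to\mu$ weakly, then sets $M^{(n)}=M(\sigma_n)$. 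You instead round $nA$ directly to an integer matrix with the prescribed margins $M_r$, $M_s$ via the standard integrality/rounding property of transportation polytopes. Your route is more elementary and self-contained (no appeal to the permutation-approximation machinery), at the cost of having to first adjust $nA$ by $O(1)$ per row and column so that its margins match the integers $M_r=\lceil nr/k\rceil-\lceil n(r-1)/k\rceil$ rather than $n/k$; the paper's route gets a genuine element of $\cM_{k,n}$ for free but imports two external lemmas.

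Two small points worth tightening. First, your assertion that the easy direction is ``automatic since $\frac1n\cM_{k,n}\subset\lbl(\epsilon)$'' is not literally true: for $M\in\llb(\epsilon)$ the matrix $\frac1nM$ has row sums $M_r/n\neq 1/k$, so it does not lie in $\cM_k$ and hence not in $\lbl(\epsilon)$. It lies within $O(1/n)$ of $\lbl(\epsilon)$, and the paper handles this by a compactness argument (any subsequential limit of the minimizers lies in $\overline{\lbl(\epsilon)}$) together with continuity of $A\mapsto D(p_A||p_{U_k})$; you need the same limiting argument rather than a per-$n$ inequality. Second, your worry about ``perturbing $A^\ast$ into the interior to avoid vanishing entries'' is unnecessary: since $x\mapsto x\log x$ extends continuously to $0$, the map $A\mapsto 2\log k+\sum_{r,s}A_{rs}\log A_{rs}$ is continuous on the whole compact set of nonnegative matrices summing to $1$, boundary included, which is also what makes $\inf_{\overline{\lbl(\epsilon)}}=\inf_{\lbl(\epsilon)}$ and the two one-sided bounds match.
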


\begin{proof}
For the proof, first assume  that 
\begin{align}\label{claim}
\lim_{n\rightarrow\infty}\min_{M\in \llb(\epsilon)}D(p_{M/n}||p_{U_k})=\inf_{A\in\lbl(\epsilon)}D(p_A||p_{U_k}),
\end{align}
where the definition of $p_A$ to matrices $A$ whose row/column sums need not equal $1/k$, to accomodate for the fact that for any $M\in \llb(\epsilon)$ the matrix $\frac{1}{n}M$ will not satisfy this exactly.
The proof of (\ref{claim}) is deferred till the end of the lemma.

For the lower bound, note that
\begin{align*}
\mathbb{P}_n(M(\pi)\in\llb(\epsilon))\geq &\max_{M\in  \llb(\epsilon)}\mathbb{P}_n(M(\pi)=M)\\
= &\max_{ M\in \llb(\epsilon)}\frac{\Big(\prod_{r=1}^kM_r\Big)^2}{n!\prod_{r,s=1}^kM_{rs}!}
\end{align*}
where  the second step uses Lemma \ref{aux}. Now, Stirling's formula gives that there exists $C<\infty$ such that 
\begin{align*}
  |\log n!-n\log n+n|=&0\text{ if }n=0\\
  =&1\text{ if }n=1\\
  \le &C\log n\text{ if }n\ge 2,
  \end{align*}
 and so  \begin{align*}
\frac{1}{n}\log\mathbb{P}_n(M(\pi)\in \llb(\epsilon))\geq -\min_{M\in \llb(\epsilon)}D(p_{M/n}||p_{U_k})-\frac{C_k\log n}{n}
\end{align*}
for some constant $C_k<\infty$. 
On taking limits using (\ref{claim})  completes the proof of the lower bound.

For the upper bound  note that
\begin{align*}
\mathbb{P}_n(M(\pi)\in \llb(\epsilon))\leq  &\Big({{n+k^2-1}\atop{k^2-1}}\Big)\max_{M\in \llb(\epsilon)}\mathbb{P}_n(M(\pi)=M)\\
\leq& (n+k^2)^{k^2}\max_{ M\in\llb(\epsilon)}\mathbb{P}_n(M(\pi)=M),\end{align*} since any valid configuration $M$ is a non negative integral solution of the equation $\sum_{r,s=1}^kM_{rs}=n$. Thus proceeding as before it follows that
\begin{align*}
\frac{1}{n}\log\mathbb{P}_n(M(\pi)\in \llb(\epsilon))\leq -\min_{M\in \llb(\epsilon)}D(p_{M/n}||p_{U_k})+\frac{C_k'\log n}{n}
\end{align*}
for some other $C_k'<\infty$, which on taking limits using (\ref{claim}) completes the proof of the upper bound.
 \\
 
 It thus remains to prove (\ref{claim}). To this effect, let $M^{(n)}$ denote the minimizing configuration on the l.h.s. of (\ref{claim}). Then $\frac{1}{n}M^{(n)}$ is a sequence in the compact set $\{A:A_{rs}\geq 0:\sum_{r,s=1}^kA_{rs}=1\}$, and any convergent subsequence converges to a point in $ \overline{\lbl(\epsilon)}$. Thus 
 \[\liminf_{n\rightarrow\infty}\min_{M\in\llb(\epsilon)}D(p_{M/n}||p_{U_k})\geq \inf_{A\in \overline{\lbl(\epsilon)}}D(p_A||p_{U_k})=\inf_{A\in \lbl(\epsilon)}D(p_A||p_{U_k}),\]
where the last equality follows from since $A\mapsto D(p_A||p_{U_k})$ is continuous, completing the proof of the lower bound in (\ref{claim}).

Proceeding to prove the upper bound, it suffices to prove that  for any $A\in\lbl(\epsilon)$ there exists a sequence $M^{(n)}\in\llb(\epsilon)$ such that $\frac{1}{n}M^{(n)}$ converges to $A$ as $n\rightarrow\infty$. To this effect, let  $\mu\in\sM$ be such that $P_{k,\mu}=A$. (It is easy to check that such a $\mu$ always exists for any $A\in \cM_k$). By \cite[Lemma 4.2]{HKMRS} and \cite[Lemma 5.3]{HKMRS}  there exists a sequence of permutations $\{\sigma_n\}_{n\ge 1}$ with $\sigma_n\in S_n$ such that $\nu_{\sigma_n}$ converges weakly to $\mu$, and so  setting $M^{(n)}=M(\sigma_n)$ one has that $M^{(n)}\in \cM_{k,n}$ and $\frac{1}{n}M^{(n)}\rightarrow P_{k,\mu}=A$. Also the set
$$W_k:=\{B\in[0,1]^{k^2}:||B-P_{k,\mu_0}||_\infty<\epsilon\} $$ is open, and since $A\in W_k$, it follows that $\frac{1}{n}M^{(n)}\in W_k$ for all large $n$. Since $\llb(\epsilon)=nW_k\cap \cM_{k,n}$, the proof of (\ref{claim}) is complete.
\end{proof}
The next lemma derives another technical estimate using Lemma \ref{dist}. This lemma will be used to prove Theorem \ref{ldp}.
\begin{lem}\label{limit}
For any set $\ub(\epsilon)$ 
 one has $$\lim_{n\rightarrow\infty}\frac{1}{n}\log \mathbb{P}_n(\mu_\pi\in\ub(\epsilon))=-\inf_{A\in\lbl(\epsilon)} D(p_A||p_{U_k}).$$ 

\end{lem}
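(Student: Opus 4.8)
The plan is to transfer the large deviation estimate of Lemma \ref{dist}, which is phrased in terms of the integer matrix $M(\pi)$, to the measure $\mu_\pi$, by exploiting that $\mu_\pi$ and $\nu_\pi$ are $O(1/n)$-close in $d_\infty$ and that $n\nu_\pi(T_{rs})=M_{rs}(\pi)$. The first step is a deterministic comparison. Each $T_{rs}$ is a rectangle, so its $\mu_\pi$- and $\nu_\pi$-measures differ by at most four times $d_\infty(\mu_\pi,\nu_\pi)\le 2/n$; together with $n\nu_\pi(T_{rs})=M_{rs}(\pi)$ this gives $||P_{k,\mu_\pi}-\frac1nM(\pi)||_\infty\le 8/n$. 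Since $\mu_\pi\in\sM$ always, the event $\{\mu_\pi\in\ub(\epsilon)\}$ is just $\{P_{k,\mu_\pi}\in\lbl(\epsilon)\}$, and on this event the triangle inequality yields $\frac1nM(\pi)\in\lbl(\epsilon+8/n)$; as $M(\pi)\in\cM_{k,n}$ always, this means $M(\pi)\in\llb(\epsilon+8/n)$. Conversely, if $M(\pi)\in\llb(\epsilon-8/n)$ then $||P_{k,\mu_\pi}-P_{k,\mu_0}||_\infty<8/n+(\epsilon-8/n)=\epsilon$, i.e. $\mu_\pi\in\ub(\epsilon)$. Hence, for every $n$ with $8/n<\epsilon$,
\[\P_n\bigl(M(\pi)\in\llb(\epsilon-8/n)\bigr)\le \P_n\bigl(\mu_\pi\in\ub(\epsilon)\bigr)\le \P_n\bigl(M(\pi)\in\llb(\epsilon+8/n)\bigr).\]

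The second step removes the $n$-dependence in the radii. Fix $\delta\in(0,\epsilon)$. Since $\lbl(\cdot)$ (hence $\llb(\cdot)$) is increasing in the radius, for all large $n$ one has $\llb(\epsilon-\delta)\subseteq\llb(\epsilon-8/n)$ and $\llb(\epsilon+8/n)\subseteq\llb(\epsilon+\delta)$. Plugging these inclusions into the sandwich above and invoking Lemma \ref{dist} at radii $\epsilon\pm\delta$ gives
\[-\inf_{A\in\lbl(\epsilon-\delta)}D(p_A||p_{U_k})\le \liminf_{n\to\infty}\frac1n\log\P_n\bigl(\mu_\pi\in\ub(\epsilon)\bigr)\le \limsup_{n\to\infty}\frac1n\log\P_n\bigl(\mu_\pi\in\ub(\epsilon)\bigr)\le -\inf_{A\in\lbl(\epsilon+\delta)}D(p_A||p_{U_k}).\]
It remains to let $\delta\downarrow 0$ and check that $\phi(\eta):=\inf_{A\in\lbl(\eta)}D(p_A||p_{U_k})$ is continuous at $\eta=\epsilon$; granting this, both bounds converge to $-\phi(\epsilon)=-\inf_{A\in\lbl(\epsilon)}D(p_A||p_{U_k})$, which is the claim.

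For the continuity of $\phi$, left-continuity at $\epsilon$ is immediate since $\lbl(\epsilon)=\bigcup_{\eta<\epsilon}\lbl(\eta)$ is an increasing union, so $\inf_{\eta<\epsilon}\phi(\eta)=\phi(\epsilon)$. Right-continuity is the only delicate point, and it is where the structure of $\lbl(\epsilon)$ is used: $P_{k,\mu_0}\in\lbl(\eta)$ for every $\eta>0$, the set $\cM_k$ is compact, and $A\mapsto D(p_A||p_{U_k})$ is continuous and convex on it. Given near-minimizers $A_\eta\in\lbl(\eta)$ as $\eta\downarrow\epsilon$, extract a subsequential limit $A^\star$ with $||A^\star-P_{k,\mu_0}||_\infty\le\epsilon$ and $D(p_{A^\star}||p_{U_k})\le\lim_{\eta\downarrow\epsilon}\phi(\eta)$; then $A_t:=(1-t)A^\star+tP_{k,\mu_0}$ satisfies $||A_t-P_{k,\mu_0}||_\infty\le(1-t)\epsilon<\epsilon$, so $A_t\in\lbl(\epsilon)$ for $t\in(0,1]$, while $D(p_{A_t}||p_{U_k})\to D(p_{A^\star}||p_{U_k})$ as $t\downarrow 0$ by continuity. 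This forces $\phi(\epsilon)\le\lim_{\eta\downarrow\epsilon}\phi(\eta)$, and the reverse inequality is trivial from monotonicity, completing the proof. I expect this right-continuity step — essentially the fact that passing from the open ball $\lbl(\epsilon)$ to its closure does not change the infimum of the divergence — to be the main (and only) real obstacle; everything else is bookkeeping around the $d_\infty$ bound already recorded in the paper and an application of Lemma \ref{dist}.
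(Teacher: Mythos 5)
Your proof is correct and follows essentially the same route as the paper: the deterministic $O(1/n)$ comparison between $P_{k,\mu_\pi}$ and $\tfrac{1}{n}M(\pi)$, the sandwich by $\llb(\epsilon\mp\delta)$, Lemma \ref{dist}, and $\delta\downarrow 0$. Your convex-interpolation argument for the right-continuity of $\eta\mapsto\inf_{A\in\lbl(\eta)}D(p_A||p_{U_k})$ is a more explicit justification of the step the paper handles by passing to $\overline{\lbl(\epsilon)}$ and invoking continuity of $A\mapsto D(p_A||p_{U_k})$, but it is the same idea.
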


\begin{proof}
First note that 
\[||P_{k,\mu_\pi}-\frac{1}{n}M(\pi)||_\infty\leq \frac{2}{n}.\] Indeed, since each square $T_{rs}$ has four boundaries each of which intersect in exactly one row/column of the $n\times n$ partition of the unit square, the two quantities above can differ only if there is an element on one of these rows/columns. Since each such square has probability $1/n$ under $\mu_\pi$, the maximum difference can be at most $2/n$.
\\
Thus 
 for any $\delta\in (0,\epsilon)$ and all $n$ large enough,
$$\mathbb{P}_n(\mu_\pi\in \ub(\epsilon)) \geq \mathbb{P}_n(M(\pi)\in \llb(\epsilon-\delta))$$
Using Lemma \ref{dist} gives 
\begin{align*}
\liminf_{n\rightarrow\infty}\frac{1}{n}\log \mathbb{P}_n(M(\pi)\in \llb(\epsilon-\delta))\geq -\inf_{A\in \lbl(\epsilon-\delta)}D(p_A||p_{U_k}).
\end{align*}
Letting $\delta\downarrow0$  gives
\begin{align}\label{claim1}
\liminf_{n\rightarrow\infty}\frac{1}{n}\log \mathbb{P}_n(\ub(\epsilon))\geq -\inf_{A\in\lbl(\epsilon))}D(p_A||p_{U_k}).
\end{align}
A similar argument gives
\begin{align*}
\limsup_{n\rightarrow\infty}\frac{1}{n}\log \mathbb{P}_n(M(\pi) \llb(\epsilon+\delta))\leq -\inf_{A\in\lbl(\epsilon+\delta))}D(p_A||p_{U_k}),
\end{align*}
from which, letting $\delta\downarrow 0$ gives 
\begin{align}\label{claim2}
\limsup_{n\rightarrow\infty}\frac{1}{n}\log \mathbb{P}_n(\ub(\epsilon))\leq -\inf_{A\in\overline{\lbl(\epsilon)})}D(p_A||p_{U_k}).
\end{align}
Combining (\ref{claim1}) and (\ref{claim2}) 
gives $$\lim_{n\rightarrow\infty}\frac{1}{n}\log\mathbb{P}_n(\ub(\epsilon))=-\inf_{A\in\lbl(\epsilon))}D(p_A||p_{U_k}),$$ using the continuity of  $A\mapsto D(p_A||p_{U_k})$. This completes the proof of the lemma. 
\end{proof}
\begin{proof}[Proof of Theorem \ref{ldp}]
Since $\sM_0$ is a base for the weak topology on $\sM$, by Lemma \ref{limit} and \cite[Theorem 4.1.11]{DZ} it follows that $\mathbb{P}_n$ follows a weak ldp with the rate function 
$$I(\mu)=\sup_{\ub(\epsilon)\ni\mu}\quad \inf_{A\in \lbl(\epsilon)}D(p_A||p_{U_k}).$$ 
 Also since $\sM$ is compact it follows that  full ldp holds with the good rate function $I(.)$.
It thus remains to prove that $I(\mu)=D(\mu||u)$. 
 To this effect, first note that $\mu\in \sM[k,\mu](1/k)$, and so 
 \[I(\mu)\ge \liminf_{k\rightarrow\infty}\inf_{A\in \overline{\cV[k,\mu](1/k)}}D(p_A||p_{U_k})=\liminf_{k\rightarrow\infty}D(p_{A_k}||p_{U_k}),\]
where $A_k$ denotes any minimizer of 
$A\mapsto D(p_A||p_{U_k})$ over $\overline{\cV[k,\mu](1/k)}$. But then $p_{A_k}$ converges weakly to $\mu$ as $k\rightarrow\infty$. 
 The  lower semi continuity of $D(.||.)$ then implies $I(\mu)\geq D(\mu||u)$, proving the lower bound. 
\\

 For the upper bound note that  the first supremum is over all $\sM[k,\mu_0](\epsilon)$ containing $\mu$ , and so  with $A=P_{k,\mu}\in \cV[k,\mu](\epsilon)$ one has $$I(\mu)\leq \sup_{k\ge 1}D(p_{_{P_{k,\mu}}}||p_{U_k})$$ Also note that 
 \begin{align*}
 D(\mu||u)=&\sup_{f\in B[0,1]^2}\{\int_{[0,1]^2}fd\mu-\log \int_{[0,1]^2}e^f du\},\\
 D(p_{_{P_{k,\mu}}}||p_{U_k})=&\sup_{f\in B_k[0,1]^2}\{\int_{[0,1]^2}fd\mu-\log \int_{[0,1]^2}e^f du\},\
\end{align*}
 where $B[0,1]^2$ denotes the set of all bounded measurable functions on $[0,1]^2$, and $B_k[0,1]^2$ denotes the subset of $B[0,1]^2$ which is constant on every $T_{rs},1\leq r,s\leq k$. Indeed, both the  results follows from \cite[Lemma 6.2.13]{DZ}. Consequently  $\sup_{k\ge 1} D(p_{_{P_{k,\mu}}}||p_{U_k})\leq D(\mu||u)$, thus completing the proof of the upper bound.

   \end{proof}

   \section{Acknowledgement}

   This paper benefitted from helpful discussions with Persi Diaconis, Amir Dembo, Sourav Chatterjee, Susan Holmes, Bhaswar Bhattacharya, and Austen Head.  I would like to thank Maxwell Grazier G'Sell for help with acquiring the draft lottery data. 
   
   The contents of this paper also appear in the author's Phd thesis advised by Persi Diaconis.

\end{document}